\documentclass{article}
\usepackage[a4paper, left=3cm, right=3cm, top=3cm]{geometry}
\usepackage{amssymb}
\usepackage{amsmath}
\usepackage{amsthm}
\usepackage{tikz}
\usepackage{verbatim}
\usepackage{overpic}
\usepackage{bbm} 
\usepackage{setspace}
\usepackage{float} 

\title{Noise dependent synchronization of a degenerate SDE}
\author{Isabell Vorkastner \thanks{
	Institut f\" ur Mathematik, MA 7-5, Fakult\" at II, Technische Universit\" at Berlin, 
	Stra\ss e des 17. Juni 136, 10623 Berlin, Germany; \texttt{vorkastn@math.tu-berlin.de} }}
\date{July 2016}

\begin{document}

\theoremstyle{definition}

\newtheorem{theorem}{Theorem}[section]
\newtheorem{lemma}[theorem]{Lemma}
\newtheorem{proposition}[theorem]{Proposition}
\newtheorem{corollary}[theorem]{Corollary}
\newtheorem{definition}[theorem]{Definition}
\newtheorem{remark}[theorem]{Remark}
\newtheorem{example}[theorem]{Example}

\setcounter{page}{1}

\maketitle

\begin{abstract}
	\noindent
	We provide an example of an SDE with degenerate additive noise where synchronization depends on 
	the strength of noise and the number of directions in which the noise acts. 
	Here, synchronization means that the weak random attractor consists of a single random point.
	Indicated by a change of sign of the top Lyapunov exponent, we prove synchronization
	respectively no (weak) synchronization. \\ \\
	\noindent
	\textit{Keywords.} synchronization, random dynamical system, Lyapunov exponent \\
	\noindent
	\textit{2010 Mathematics Subject Classification.} 37D45, 37G35, 37H15
	
\end{abstract}

\section{Introduction}

We consider the stochastic differential equation with drift given by a multidimensional 
double-well potential with degenerate additive noise. That is
\begin{equation} \label{sde}
\begin{split}
	dX_t &= \left( X_t - \left| \left( \begin{array}{c} X_t \\ Y_t \end{array} \right)	 \right| ^{2}
		X_t \right) dt + \sigma \, dW_t  \quad \textrm{on} \; \mathbb{R}^n \\
	dY_t &= \left( Y_t \; - \left| \left( \begin{array}{c} X_t \\ Y_t \end{array} \right)	 \right| ^{2} 
		Y_t \; \right) dt \qquad \qquad \quad \textrm{on} \; \mathbb{R}^{d-n}
\end{split}
\end{equation}
for $\sigma > 0$ and $d,n \in \mathbb{N}$ with $ n < d$. 
$W_t$ is a $n$-dimensional Brownian motion.
We prove that the associated random dynamical system does synchronize in the case $n=1$ for large $\sigma$ and in the case $n \geq 2$. Additionally, we show that there is no synchronization,
not even weak synchronization, in the case $n=1$ for small $\sigma$.
Here, (weak) synchronization means that there exists a weak (point) attractor which is a single random point. 
Thus, the long-time dynamics are asymptotically globally stable.
In particular, in case of (weak) synchronization, for each $x,y \in \mathbb{R}^d$ and $\omega \in \Omega$ 
it follows that
\begin{align*}
	\left| \varphi_t (\omega, x) - \varphi_t (\omega, y) \right| \rightarrow 0 \qquad \textrm{as } t \rightarrow \infty
\end{align*}
in probability. 
\\
In the deterministic case, for $\sigma =0$, the long-time dynamics are not asymptotically globally stable. The attractor in this case is the closed unit ball $\bar{B}(0,1)$.
Moreover, the minimal point attractor is given by $S^{d-1} \cup \left\{ 0 \right\}$, where $S^{d-1}$ is 
the $( d-1 )$-dimensional unit sphere.
Hence, there will be no (weak) synchronization. 
\\
In \cite{SynNoise} it was shown that under some conditions a general white noise random dynamical system 
on a complete, separable metric space does synchronize. 
There, the stochastic differential equation with drift given by a multidimensional double-well potential 
with non-degenerate additive noise was considered as a model example. 
Hence, the proofs in \cite{SynNoise} imply synchronization 
of the random dynamical system in the case of non-degenerate noise.
\\
For the stochastic differential equation with degenerate noise (\ref{sde}) we observe 
no synchronization in the case $n=1$ for small noise and
synchronization in the other cases. These results confirm that additive noise stabilizes
the long-time dynamics of the multidimensional double-well potential. 
The distinction between synchronization and no (weak) synchronization is indicated by a change of
sign of the top Lyapunov exponent.
\\
In order to prove synchronization in the degenerate case with negative top Lyapunov exponent, 
we follow the setup put forward in \cite{SynNoise}. 
In \cite{SynNoise} asympotic stability, 
swift transitivity and contraction on large sets were used to prove synchronization of 
a white noise random dynamical system.
However, the random dynamical system associated to (\ref{sde}) is not swift transitive. 
This can be seen by observing that the set
$\left\{ (x_1, x_2, \dots , x_d) \in \mathbb{R}^d : x_i > 0 \right\}$ is not reachable if one starts in 
$\left\{ (x_1, x_2, \dots , x_d) \in \mathbb{R}^d : x_i < 0 \right\}$ for some $n < i \leq d$.
We will deal with the lack of swift transitivity by focusing on elements of the set
$M = \left\{ (x_1,x_2, \dots, x_d) \in \mathbb{R}^d : x_i=0 \textrm{ for } i>n \right\}$. 
Asymptotic stability is obtained by a stable manifold theorem and negative top Lyapunov exponent.
\\
A positive top Lyapunov exponent of the random dynamical system associated to (\ref{sde}) implies
lack of (weak) synchronization. In general, attractors with positive top Lyapunov exponent are not well
understood yet. These attractors are sometimes called random strange attractors 
\cite{Bifurcation, Ledrappier1988, PhysRevE}. 
\\
Recently, in \cite{Bifurcation} a transition from negative to positive top Lyapunov exponent was shown too. 
They considered a system with limit cycles on a cylinder perturbed by white noise. Using \cite{SynNoise}
synchronization was also proven. However, they did not state whether there is synchronization
for positive top Lyapunov exponent.
\\
The remainder of the paper is organized as follows.
In Section 2, we introduce some notation and definitions. 
We conclude existence of a random dynamical system and of an attractor in Section 3. 
In Section 4, we estimate the top Lyapunov exponent. Using negative top Lyapunov exponent and
a stable manifold theorem, we conclude asymptotic stability. 
The actual proof of synchronization appears in Section 5. For this purpose, contraction
on large sets and some similar property to swift transitivity are shown. 
These two properties and asymptotic stability are used to prove synchronization.
In Section 6, we show that the positive top Lyapunov exponent imply no (weak) synchronization.

\section{Notation and definition}

We restrict our definitions to a random dynamical system on $\mathbb{R}^d$, 
see \cite{arnold} for a more general setting.
\begin{definition} [Metric Dynamical System]
	Let $(\Omega, \mathcal{F}, \mathbb{P})$ be a probability space 
	and $\theta = (\theta_t )_{t \in \mathbb{R}}$ be a group of 
	maps $\theta_t : \Omega \rightarrow \Omega$ satisfying
	\begin{enumerate}
	\item [(i)] $(\omega,t) \mapsto \theta_t (\omega)$ is 
		$(\mathcal{F} \otimes \mathcal{B}(\mathbb{R}), \mathcal{F})$-measurable,
	\item [(ii)] $\theta_0 (\omega) = \omega$ for all  $\omega \in \Omega$,
	\item [(iii)] $\theta_{s+t} = \theta_s \circ \theta_t$ for all $s,t \in \mathbb{R}$,
	\item [(iv)] $\theta_t$ has ergodic invariant measure $\mathbb{P}$.
	\end{enumerate}
	Then, $(\Omega, \mathcal{F}, \mathbb{P}, \theta)$ is called a \textit{metric dynamical system}.
\end{definition}

\begin{definition} [Random Dynamical System]
	Let $(\Omega, \mathcal{F}, \mathbb{P}, \theta)$ be a metric dynamical system. 
	Further, let $\varphi : \mathbb{R}_+ \times \Omega \times \mathbb{R}^d \rightarrow \mathbb{R}^d$ 
	be such that
	\begin{enumerate}
	\item [(i)] $\varphi$ is $(\mathcal{B}(\mathbb{R}_+) \otimes \mathcal{F} \otimes 
		\mathcal{B}(\mathbb{R}^d), \mathcal{B}(\mathbb{R}^d))$-measurable,
	\item [(ii)] $\varphi_0 (\omega, x) = x$ for all $x \in \mathbb{R}^d$, $\omega \in \Omega$,
	\item [(iii)] $\varphi_{t+s} (\omega,x)= \varphi_t ( \theta_s \omega , \varphi_s (\omega,x))$
		for all $x \in \mathbb{R}^d$, $t,s \geq 0$, $\omega \in \Omega$,
	\item [(iv)] $x \mapsto \varphi_s(\omega, x)$ is continuous for each $s \geq 0$ and $\omega \in \Omega$.
	\end{enumerate}
	Then, the collection $(\Omega, \mathcal{F}, \mathbb{P}, \theta, \varphi )$ 
	is called a \textit{random dynamical system} (RDS).
\end{definition}

\noindent
As an example consider an RDS generated by a stochastic differential equation (SDE) 
driven by a Brownian motion. 
In order to use the white noise property of the Brownian motion, 
the existence of a family $\mathbb{F}= (\mathcal{F}_{s,t})_{- \infty < s \leq t < \infty}$ 
of sub-$\sigma$ algebras  of $\mathcal{F}$ will be desirable. 
This family of sub-$\sigma$ algebras should satisfy $\mathcal{F}_{t,u} \subset \mathcal{F}_{s,v}$ 
for $s \leq t \leq u \leq v$, $\theta_r^{-1} (\mathcal{F}_{s,t}) = \mathcal{F}_{s+r,t+r}$ 
for all $r,s,t$ and $\mathcal{F}_{s,t}$ and $\mathcal{F}_{u,v}$ are independent for
$s \leq t \leq u \leq v $. For each $t \in \mathbb{R}$ denote by $\mathcal{F}_t$ 
the smallest $\sigma$-algebra containing all $\mathcal{F}_{s,t}$ with $s \leq t$ 
and by $\mathcal{F}_{t, \infty}$ the smallest $\sigma$-algebra containig all 
$\mathcal{F}_{t,u}$ with $t \leq u$.
Note that the $\sigma$-algebras $\mathcal{F}_t$ and $\mathcal{F}_{t,\infty}$ 
are independent for all $t \in \mathbb{R}$.
Furthermore, assume that $\varphi_s (\cdot, x)$ is $\mathcal{F}_{0,s}$-measurable 
for each $s \geq 0$ and $x \in \mathbb{R}^d$.
Then, the collection $( \Omega, \mathcal{F}, \mathbb{F}, \mathbb{P}, \theta, \varphi )$ 
is called a \textit{white noise random dynamical system}.
For a white noise RDS $\varphi$ define the associated Markovian semigroup by
$P_t f (x) := \mathbb{E}\left[ f \left( \varphi_t \left( \cdot,x \right) \right) \right]$
for measurable, bounded fuctions $f$.

\begin{definition}
	A family $\left\{ D (\omega) \right\}_{\omega \in \Omega}$ of non-empty subsets 
	of $\mathbb{R}^d$ is said to be
	\begin{enumerate}
		\item [(i)]  a \textit{random compact set} if it is $\mathbb{P}$-almost surely compact 
		and $\omega \mapsto \sup_{y \in D(\omega)} |x - y|$ 
		is $\mathcal{F}$-measurable for each $x \in \mathbb{R}^d$.
		\item [(ii)] \textit{$\varphi$-invariant} if for all $t \geq 0$
		\begin{align*}
			\varphi_t (\omega, D(\omega)) = D(\theta_t \omega)
		\end{align*}
		for almost all $\omega \in \Omega$.
	\end{enumerate}	 
\end{definition}

\begin{definition}[Attractor]
	Let $(\Omega, \mathcal{F}, \mathbb{P}, \theta, \varphi)$ be an RDS. 
	A random compact set $A$ is called a \textit{pullback attractor} if 
	it satisfies the following properties
	\begin{enumerate}
	\item [(i)] A is $\varphi$-invariant
	\item [(ii)] for every compact set $B \subset \mathbb{R}^d$ 
		\begin{align*}
			\lim_{t \rightarrow \infty} \sup_{x \in B} \inf_{a \in A(\omega)}
				\left| \varphi_t (\theta_{-t}\omega ,x) - a \right| =0 
				\qquad \mathbb{P} \textrm{-almost surely}.
		\end{align*}
	\end{enumerate}
	A random compact set $ A$ is called a \textit{weak attraktor}, if it fulfills 
	the properties above with almost sure convergence replaced 
	by convergence in probability in (ii).
	It is called a \textit{(weak) point attractor}, if it satisfies the properties above 
	with compact sets $B$ replaced by single points in (ii).
\end{definition}

\noindent
Note that every pullback attractor is a weak attractor. The converse is not true. 
Examples for this can be found in \cite{ref1}.



\begin{lemma}
	\label{F_0}
	Let $A$ be a weak attractor of an RDS $\varphi$. 
	Then $A$ admits an $\mathcal{F}_0$-measurable version. 
	Hence, there exists an $\mathcal{F}_0$-measurable
	weak attractor $\tilde{A}$ such that $A = \tilde{A}$ $\mathbb{P}$-almost surely.
\end{lemma}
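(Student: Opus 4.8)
The plan is to exhibit the $\mathcal{F}_0$-measurable version as an almost sure limit of pullback images of a fixed ball, each of which is automatically $\mathcal{F}_0$-measurable. First I would record the basic measurability of the pullback. Since $\varphi_s(\cdot, x)$ is $\mathcal{F}_{0,s}$-measurable and $\theta_{-t}^{-1}(\mathcal{F}_{0,t}) = \mathcal{F}_{-t,0} \subseteq \mathcal{F}_0$, the map $\omega \mapsto \varphi_t(\theta_{-t}\omega, x)$ is $\mathcal{F}_0$-measurable for every $t \ge 0$ and $x \in \mathbb{R}^d$. Fix a radius $R$ with $A(\omega) \subseteq \bar B(0,R)$ for $\mathbb{P}$-almost all $\omega$ (such $R$ exists since the attractor is contained in a deterministic absorbing ball). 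Writing $\Phi_t(\omega) := \varphi_t(\theta_{-t}\omega, \bar B(0,R))$ and using continuity of $x \mapsto \varphi_t(\theta_{-t}\omega, x)$, one has $\sup_{y \in \Phi_t(\omega)} |x - y| = \sup_{k} |x - \varphi_t(\theta_{-t}\omega, q_k)|$ for a countable dense set $\{q_k\} \subseteq \bar B(0,R)$, so each $\Phi_t$ is an $\mathcal{F}_0$-measurable random compact set.

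Next I would upgrade weak attraction to two-sided (Hausdorff) convergence of $\Phi_t$ towards $A$. Weak attraction applied to the compact set $\bar B(0,R)$ gives $\sup_{y \in \Phi_t(\omega)} \inf_{a \in A(\omega)} |y - a| \to 0$ in probability. For the reverse inclusion I would use $\varphi$-invariance together with the bound on $A$: for almost all $\omega$ one has $A(\omega) = \varphi_t(\theta_{-t}\omega, A(\theta_{-t}\omega)) \subseteq \varphi_t(\theta_{-t}\omega, \bar B(0,R)) = \Phi_t(\omega)$, whence $\sup_{a \in A(\omega)} \inf_{y \in \Phi_t(\omega)} |a - y| = 0$. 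Combining the two estimates yields $d_H(\Phi_t(\omega), A(\omega)) \to 0$ in probability, where $d_H$ denotes the Hausdorff metric on the compact subsets of $\bar B(0,R)$.

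Finally, since the compact subsets of $\bar B(0,R)$ form a compact metric space under $d_H$, convergence in probability yields a deterministic sequence $t_n \to \infty$ along which $\Phi_{t_n}(\omega) \to A(\omega)$ in $d_H$ for $\mathbb{P}$-almost all $\omega$. Hausdorff convergence forces $\sup_{y \in \Phi_{t_n}(\omega)} |x - y| \to \sup_{a \in A(\omega)} |x - a|$, so the limiting random compact set $\tilde A(\omega) := \lim_n \Phi_{t_n}(\omega)$, set equal to $\bar B(0,R)$ on the exceptional null set, is $\mathcal{F}_0$-measurable as an almost sure limit of $\mathcal{F}_0$-measurable random compact sets, and $\tilde A = A$ almost surely. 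I expect the main obstacle to be the reverse inclusion in the Hausdorff convergence: weak attraction only supplies the one-sided estimate, and the opposite direction has to be produced by hand from invariance together with the fact that $A$ sits inside a fixed deterministic ball. The subsequent passage from convergence in probability to an almost surely convergent subsequence is then routine, and it is exactly this step that converts the $\mathcal{F}_0$-measurability of each finite-time pullback into $\mathcal{F}_0$-measurability of the limit.
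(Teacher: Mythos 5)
There is a genuine gap at the foundation of your construction: the claim that one can fix a deterministic $R$ with $A(\omega) \subseteq \bar{B}(0,R)$ for $\mathbb{P}$-almost all $\omega$. A weak attractor of an RDS driven by additive noise is in general \emph{not} contained in any deterministic ball; the absorbing sets produced by the dissipativity of the drift are random, with radius depending on the past of the noise path. The paper itself furnishes a counterexample to your assumption: in the synchronization regime ($n=1$, $\sigma \geq 2$, or $n \geq 2$) the attractor is a singleton $A(\omega) = \{a(\omega)\}$ whose law is the invariant measure $\rho$ of Remark \ref{invariant_measure}, which has a density proportional to $e^{\frac{2}{\sigma^2}(\frac{1}{2}|x|^2 - \frac{1}{4}|x|^4)}$, strictly positive on all of $M$. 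Hence $\mathbb{P}(A \subseteq \bar{B}(0,R)) < 1$ for every $R$. This assumption is not peripheral: it is what gives you the reverse inclusion $A(\omega) \subseteq \Phi_t(\omega)$ via invariance, and it is what lets you work in the compact hyperspace of closed subsets of a fixed ball so that convergence in probability upgrades to an almost surely convergent subsequence. Without it, both halves of the Hausdorff-convergence argument collapse. (Your remaining steps are sound, including the measurability patch on the exceptional set, since the Cauchy event for the $\mathcal{F}_0$-measurable sets $\Phi_{t_n}$ is itself an $\mathcal{F}_0$-event.)

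The standard repair replaces the single ball by a countable exhaustion: one defines the pullback omega-limit sets $\Omega_{\bar{B}(0,k)}(\omega) = \bigcap_{T \geq 0} \overline{\bigcup_{t \geq T} \varphi_t(\theta_{-t}\omega, \bar{B}(0,k))}$, which are $\mathcal{F}_0$-measurable by construction (only pullback maps $\varphi_t(\theta_{-t}\cdot,\,\cdot)$ appear, and these are $\mathcal{F}_0$-measurable exactly as you argue at the start), and sets $\tilde{A}(\omega) = \overline{\bigcup_k \Omega_{\bar{B}(0,k)}(\omega)}$; the weak attraction gives $\Omega_{\bar{B}(0,k)} \subseteq A$ almost surely, while the reverse inclusion uses invariance together with the fact that $A(\omega)$ is compact, hence contained in $\bar{B}(0,k(\omega))$ for a \emph{random} index. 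Carrying this out in detail is essentially reproving what the paper invokes: its proof is a two-line citation of \cite[Corollary 4.5]{Crauel2009} for the existence of an $\mathcal{F}_0$-measurable weak attractor $\tilde{A}$, combined with \cite[Lemma 1.3]{SynNoise} (almost sure uniqueness of weak attractors) to conclude $A = \tilde{A}$ $\mathbb{P}$-almost surely. Note also that your argument never addresses uniqueness: even where your construction works, identifying the limit object with the \emph{given} attractor $A$ on a full-measure set is precisely the content of that uniqueness lemma, and it deserves to be cited or proved rather than absorbed into the Hausdorff limit.
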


\begin{proof}
	Since $\varphi$ has a weak attractor, \cite[Corollary 4.5.]{Crauel2009} implies that 
	$\varphi$ has an $\mathcal{F}_0$-measurable weak attractor $\tilde{A}$.
	By \cite[Lemma 1.3]{SynNoise} $A = \tilde{A}$ $\mathbb{P}$-almost surely.  
\end{proof}

\begin{definition}[Synchronization]
	\textit{Synchronization} occurs if there is a weak attractor $A(\omega)$ being 
	a singleton for $\mathbb{P}$-almost every $\omega \in \Omega$.
	\textit{Weak synchronization} is said to occur if there is a weak point attractor $A(\omega)$ being 
	a singleton for $\mathbb{P}$-almost every $\omega \in \Omega$.
\end{definition}

\noindent
Define
\begin{align*}
	B(x,r):= \left\{ y \in \mathbb{R}^d : |y-x|<r \right\}
\end{align*}
to be the open ball centered at $x \in \mathbb{R}^d$ with radius $r>0$ 
and let $\bar{B}(x,r)$ be the respective closed ball.
Denote by 
\begin{align*}
	\textrm{diam} (A) := \sup_{x,y \in A} |x-y|.
\end{align*}
the diameter of a set $A \subset \mathbb{R}^d$.
Next, some properties of an RDS are defined. These properties were used 
in \cite{SynNoise} to show synchronization.
Note that asymptotic stability and contraction on large sets are necessary conditions.

\begin{definition}[Asymptotic Stability]
	Let $U \subset \mathbb{R}^d$ be a deterministic non-empty open set. 
	Then, $\varphi$ is called \textit{asymptotically stable  on $U$} if there exists 
	a deterministic sequence $t_n \rightarrow \infty$ such that
	\begin{align*}
		\mathbb{P} \left( \lim_{n \rightarrow \infty} 
		\textrm{diam} \left( \varphi_{t_n} \left( \cdot, U \right) \right) = 0 \right) > 0.
	\end{align*}
\end{definition}

\begin{definition}[Swift Transitivity]
	$\varphi$ is called \textit{swift transitive} if for every $r>0$ and 
	$x,y \in \mathbb{R}^d$, there is a time $t>0$ such that
		\begin{align*}
			\mathbb{P}(\varphi_{t} ( \cdot, B(x,r)) \subset B(y,2r)) > 0.
		\end{align*}
\end{definition}

\begin{definition}[Contraction on Large Sets]
	$\varphi$ is called \textit{contracting on large sets} if for every $r>0$, 
	there is a ball $B(x,r)$ and a time $t>0$ such that
		\begin{align*}
			\mathbb{P}\left( \textrm{diam}\left( \varphi_{t} 
				( \cdot, B(x,r))\right) \leq \frac{r}{4}\right) > 0.
		\end{align*}
\end{definition}	
\section{Existence of an attractor}

In this section we deduce existence of an RDS associated to (\ref{sde}) 
and of an attractor of this RDS.
Denote by $ b : \mathbb{R}^d \rightarrow \mathbb{R}^d$ with $ b(x) := (1 - |x|^2)x$ the drift of (\ref{sde}).

\begin{lemma} [One-sided Lipschitz condition] \label{oslc}
	The drift $b$ fulfills
	\begin{align*}
		\left\langle x - y, b(x) -b(y) \right\rangle \leq |x-y|^2 \left(1- \frac{3}{4} |x|^2\right)
	\end{align*}
	for all $x,y \in \mathbb{R}^d$. In particular, $b$ satisfies the one-sided Lipschitz condition.
\end{lemma}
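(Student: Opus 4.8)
The plan is to expand the inner product directly and reduce the cubic nonlinearity to a pair of scalar quantities. Writing $b(x) = x - |x|^2 x$, I would start from
\begin{align*}
	\langle x - y, b(x) - b(y) \rangle = |x-y|^2 - \langle x-y, \, |x|^2 x - |y|^2 y \rangle.
\end{align*}
The only genuinely nonlinear contribution is the last term. The key algebraic manoeuvre is to split $|x|^2 x - |y|^2 y = |x|^2 (x-y) + (|x|^2 - |y|^2) y$, which isolates a clean multiple of $|x-y|^2$ and yields
\begin{align*}
	\langle x-y, b(x)-b(y) \rangle = (1 - |x|^2)|x-y|^2 - (|x|^2 - |y|^2) \langle x-y, y \rangle.
\end{align*}

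Comparing this with the target bound $|x-y|^2 \left(1 - \frac{3}{4} |x|^2\right)$, the claim reduces to the single inequality
\begin{align*}
	\frac{1}{4} |x|^2 |x-y|^2 + (|x|^2 - |y|^2) \langle x-y, y \rangle \geq 0.
\end{align*}
To handle this I would pass to two scalar unknowns, setting $\tau := \langle x-y, y \rangle$ and $R := |x-y|^2$. Using $|x|^2 = |y|^2 + R + 2\tau$ and $|x|^2 - |y|^2 = R + 2\tau$, the left-hand side collapses to the quadratic expression $2\tau^2 + \frac{3}{2} R \tau + \frac{1}{4} R^2 + \frac{1}{4} |y|^2 R$, so the whole statement reduces to showing that this is nonnegative.

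The step I expect to be the crux is controlling the stray term $\frac{1}{4} |y|^2 R$, which is not by itself dominated by the remaining terms. The resolution is the Cauchy--Schwarz inequality: since $\tau = \langle x-y, y\rangle$ satisfies $\tau^2 \leq |x-y|^2 |y|^2 = R\,|y|^2$, I can bound $\frac{1}{4} |y|^2 R$ below by $\frac{1}{4} \tau^2$, which gives the lower bound $\frac{9}{4} \tau^2 + \frac{3}{2} R \tau + \frac{1}{4} R^2 = \frac{1}{4} (3\tau + R)^2 \geq 0$. The essential point is that the constant $\frac{3}{4}$ in the statement is precisely tuned so that after this substitution the discriminant of the remaining quadratic vanishes and a perfect square emerges; a larger constant would destroy the argument. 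Finally, the ``in particular'' assertion is immediate, since $1 - \frac{3}{4} |x|^2 \leq 1$ shows that $b$ satisfies the one-sided Lipschitz condition with constant $1$.
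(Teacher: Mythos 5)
Your proof is correct and takes essentially the same route as the paper, just in mirror image: the paper sets $a = x-y$, expands in $\langle a, x\rangle$ and $|x|^2$, completes the square $\bigl(|a|^2 - \tfrac{3}{2}\langle a,x\rangle\bigr)^2$ and then applies Cauchy--Schwarz to the leftover $\tfrac{1}{4}\langle a,x\rangle^2$, whereas you expand in $\tau = \langle x-y,y\rangle$ and $|y|^2$, apply Cauchy--Schwarz first and finish with the perfect square $\tfrac{1}{4}(3\tau + R)^2$. The ingredients (direct expansion, one Cauchy--Schwarz, one completed square, with the constant $\tfrac{3}{4}$ exactly tuned so the square is perfect) coincide, so there is no substantive difference.
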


\begin{proof}
	Let $x,y \in \mathbb{R}^d$ and define $a := x-y$. Using 
	$ |x-a|^2 = |x|^2 -2 \left\langle a,x \right\rangle + |a|^2$
	and the Cauchy - Schwarz inequality, it follows that
	\begin{align*}
		\left\langle x-y, b(x) - b(y) \right\rangle
			&= \left\langle x-y, x(1-|x|^2)- y(1-|y|^2) \right\rangle \\
			&= \left\langle a, a -|x|^2 x + |x-a|^2 (x-a) \right\rangle \\
			&= |a|^2 -(|x|^2 -2 \left\langle a,x \right\rangle + |a|^2) |a|^2 
				+ (-2 \left\langle a,x \right\rangle + |a|^2) \left\langle a, x \right\rangle \\
			&= |a|^2 - |a|^2 |x|^2 - \left( |a|^2 - \frac{3}{2} \left\langle a, x \right\rangle \right)^2
				+ \frac{1}{4} \, | \left\langle a, x \right\rangle |^2 \\
			&\leq |a|^2 - \frac{3}{4} |a|^2 |x|^2 .
	\end{align*}
\end{proof}

\begin{remark}
	\label{ex_rds}
	The drift $b$ of the SDE (\ref{sde}) satisfies the one-sided Lipschitz condition 
	by Lemma \ref{oslc} and a local Lipschitz condition 
	since $b$ is continuously differentiable. By \cite[Proposition 2.4]{Attractor}
	it follows that there exists a white noise RDS $\varphi$ associated to the SDE (\ref{sde})
	with respect to the canonical setup. This means the space 
	$\Omega$ is $C( \mathbb{R}, \mathbb{R}^n)$, the space of continuous functions 
	from $\mathbb{R}$ to $\mathbb{R}^n$, $\mathcal{F}$ is the Borel $\sigma$-field, 
	$\mathbb{P}$ is the two-sided Wiener measure, 
	$\mathcal{F}_{s,t}$ is the $\sigma$-algebra generated by $W_u - W_v$ for 
	$s \leq v \leq u \leq t$ with $W_s: \Omega \rightarrow \mathbb{R}^n$
	defined by $W_s (\omega) := \omega (s)$, and $\theta_t$ is the shift
	\begin{align*}
		(\theta_t \omega) (s) := \omega (s+t) - \omega (t).
	\end{align*}
\end{remark}

\begin{theorem} [Existence of an Attractor]
	\label{ex_attractor}
	There exists a pullback attractor $A$ of the RDS $\varphi$ associated to (\ref{sde}).
\end{theorem}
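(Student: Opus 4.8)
The plan is to exhibit a tempered random compact absorbing set for $\varphi$ and then obtain the pullback attractor as the associated $\omega$-limit set, following the standard construction of Crauel and Flandoli. Since the noise in (\ref{sde}) is additive, I would first conjugate the SDE to a random ordinary differential equation. Let $z_t \in \mathbb{R}^n$ denote the stationary Ornstein--Uhlenbeck process solving $dz_t = -z_t\,dt + \sigma\,dW_t$, given explicitly by $z_t(\omega) = \sigma \int_{-\infty}^t e^{-(t-s)}\,dW_s$, and write $\bar z_t := (z_t,0) \in \mathbb{R}^d$ for its embedding into the first $n$ coordinates. Setting $\xi_t := (X_t,Y_t)$ and $v_t := \xi_t - \bar z_t$, the noise cancels and $v$ solves the random ODE
\begin{align*}
    \dot v_t = b(v_t + \bar z_t) + \bar z_t .
\end{align*}
Since $z$ is stationary, its paths are tempered $\mathbb{P}$-almost surely, and this is the property I would use to control the forcing.

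The core step is an energy estimate exploiting the dissipativity of $b$. Writing $u_t := v_t + \bar z_t$ and using $\left\langle u, b(u)\right\rangle = |u|^2 - |u|^4$, I would compute
\begin{align*}
    \tfrac{1}{2}\tfrac{d}{dt}|v_t|^2
        = \left\langle v_t,\, b(u_t) + \bar z_t\right\rangle
        = |u_t|^2 - |u_t|^4 - \left\langle \bar z_t, b(u_t)\right\rangle + \left\langle v_t, \bar z_t\right\rangle .
\end{align*}
The dissipative term $-|u_t|^4$ grows faster in $|v_t|$ than all remaining terms, which are at most cubic in $|u_t|$ with coefficients depending on $|\bar z_t| = |z_t|$; repeated application of Young's inequality absorbs every cross term into a fraction of $|u_t|^4$ plus a polynomial $R(|z_t|)$. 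Enlarging $R$ if necessary, this yields a global bound of the form
\begin{align*}
    \tfrac{d}{dt}|v_t|^2 \leq -|v_t|^2 + R(|z_t|) .
\end{align*}
A Gronwall comparison applied in the pullback sense (replacing $\omega$ by $\theta_{-t}\omega$ and letting $t \to \infty$) then shows that every trajectory starting in a deterministic compact set enters a random ball $\bar B(0, \rho(\omega))$ after a finite time, where $\rho(\omega)$ is finite because $z$ is tempered. Translating back by $\bar z_0$, the set $\bar B(\bar z_0(\omega), \rho(\omega))$ is a tempered random compact absorbing set for $\varphi$.

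Finally, since $\mathbb{R}^d$ is finite-dimensional, closed bounded sets are compact, so the absorbing set is automatically a random compact set and the required measurability is inherited from that of $z$ and $\varphi$. Invoking the standard existence theorem for pullback attractors from a compact absorbing set in the sense of \cite{Attractor}, the attractor is given by
\begin{align*}
    A(\omega) = \bigcap_{s \ge 0} \overline{\bigcup_{t \ge s} \varphi_t\big(\theta_{-t}\omega,\, \bar B(\bar z_0(\theta_{-t}\omega),\rho(\theta_{-t}\omega))\big)} .
\end{align*}

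I expect the main obstacle to be the bookkeeping in the energy estimate rather than any conceptual difficulty, and in particular the verification that the degeneracy of the noise does not interfere. This is precisely where the drift helps: the dissipative term $-|u|^4$ acts in every coordinate, while the forcing $\bar z_t$ enters only the first $n$. Hence the estimate, and therefore the absorbing ball, controls the full state $(X_t,Y_t)$ uniformly, and the degeneracy plays no role in the existence of the attractor.
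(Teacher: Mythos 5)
Your proposal is correct, but it takes a genuinely different route from the paper. The paper's proof is a three-line verification of hypotheses: the drift $b$ satisfies the one-sided Lipschitz condition (its Lemma \ref{oslc}), is locally Lipschitz since it is $C^1$, and satisfies $\limsup_{|x| \rightarrow \infty} \left\langle x/|x|, b(x) \right\rangle = \limsup_{|x| \rightarrow \infty} |x|(1-|x|^2) = -\infty$; existence of the pullback attractor then follows directly from the cited result \cite[Theorem 3.1]{Attractor}, with no absorbing set constructed explicitly. You instead re-prove existence from scratch by the classical additive-noise scheme: conjugation by the stationary Ornstein--Uhlenbeck process to a random ODE, an energy estimate yielding $\frac{d}{dt}|v_t|^2 \leq -|v_t|^2 + R(|z_t|)$, a tempered random compact absorbing ball $\bar B(\bar z_0(\omega), \rho(\omega))$, and the Crauel--Flandoli $\omega$-limit construction. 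Your computations check out: the conjugated equation $\dot v_t = b(v_t + \bar z_t) + \bar z_t$ is right, the cross terms are indeed at most cubic in $|u_t|$ so Young's inequality absorbs them into $-|u_t|^4$, temperedness of the Ornstein--Uhlenbeck paths makes $\rho(\omega) = \bigl( \int_{-\infty}^0 e^s R(|z_s(\omega)|)\, ds \bigr)^{1/2}$ finite, and in the pullback the initial term $e^{-t}|x - \bar z_{-t}(\omega)|^2$ vanishes by the same temperedness, so deterministic compact sets are absorbed as required by the paper's definition of pullback attractor. Your closing observation is also the right one: the dissipativity $\left\langle u, b(u) \right\rangle = |u|^2 - |u|^4$ acts in all $d$ coordinates while the forcing enters only the first $n$, so the degeneracy is irrelevant for existence --- this is implicit in the paper's drift condition as well. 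What each approach buys: the paper's citation is shorter and leverages a theorem tailored to one-sided Lipschitz drifts, whereas yours is self-contained, produces an explicit quantitative absorbing set that could be reused downstream, and would generalize to settings where no off-the-shelf theorem applies; the only cost is the routine Young-inequality bookkeeping you yourself flag.
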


\begin{proof}
	By Lemma \ref{oslc} the drift $b$ satisfies the one-sided Lipschitz condition. 
	Since the drift $b$ is continuously differentiable, 
	$b$ is local Lipschitz continuous. Moreover,
	\begin{align*}
		\limsup_{|x| \rightarrow \infty} \left\langle \frac{x}{|x|} , b(x) \right\rangle 
		= \limsup_{|x| \rightarrow \infty} |x| (1- |x|^2) = - \infty.
	\end{align*}
	By \cite[Theorem 3.1.]{Attractor} it follows that $\varphi$ has a pullback attractor.
\end{proof}

\noindent
In the following denote by $\varphi$ the RDS associated to (\ref{sde}) and by $A$ 
the $\mathcal{F}_0$-measurable version of the weak attractor 
given by Theorem \ref{ex_attractor}. The existence of an $\mathcal{F}_0$-measurable version 
was shown in Lemma \ref{F_0}.	
\section{Top Lyapunov exponent and asymptotic stability}

We estimate the top Lyapunov exponent of the RDS associated to (\ref{sde}) 
and observe a change of sign. Applying a stable manifold theorem
and using negativity of the top Lyapunov exponent, asymptotic stability for the RDS associated to (\ref{sde}) is shown in the case $n=1$ for large $\sigma$ and in the case $n \geq 2$.

\begin{theorem}[Stable Manifold Theorem] 
	\label{smt}
	Let $\varphi_t(\omega,\cdot) \in C_{loc}^{1,\delta}$ for some $\delta \in (0,1)$ and 
	all $t\geq0$ and let $P_t$ be the Markovian semigroup associated to $\varphi$. 
	Assume that $P_1$ has an ergodic invariant measure $\rho$ such that
	\begin{align*}
		\mathbb{E} \int_{\mathbb{R}^d} \log^{+} \left\| D\varphi_1(\omega,x) \right\| \, d\rho(x) < \infty
	\end{align*}
	and \begin{align*}
		\mathbb{E} \int_{\mathbb{R}^d} \log^{+} \left\| \varphi_1(\omega,\cdot + x)
			- \varphi_1(\omega,x)\right\|_{C^{1,\delta}(\bar{B}(0,1))} \, d\rho(x) < \infty.
	\end{align*}
	Then,
	\begin{enumerate}
		\item[(i)]
			there are constants $\lambda_N < \ldots < \lambda_1$ such that 
		 	\begin{align*}
		 		\lim_{m \rightarrow \infty} \frac{1}{m} 
		 			\log |D\varphi_m (\omega,x)v| \in \left\{ \lambda_i \right\}_{i=1}^N
			\end{align*}
			for all $v \in \mathbb{R}^d \setminus \left\{0\right\}$ and 
			$\mathbb{P} \otimes \rho$-almost all $( \omega,x) \in \Omega \times \mathbb{R}^d$.
		 \item[(ii)]
		 	Assume that the top Lyapunov exponent $\lambda_{top} := \lambda_1 < 0$. Then, for every
		 	$\varepsilon \in (\lambda_{top},0)$
		 	there is a measurable map 
		 	$\beta: \Omega \times \mathbb{R}^d \rightarrow \mathbb{R}_+ \setminus \left\{ 0 \right\}$ 
		 	such that for $\rho$-almost all $x \in \mathbb{R}^d$
		 	\begin{align*}
		 		\mathcal{S} (\omega,x) := \left\{ y \in \mathbb{R}^d : | \varphi_m(\omega,y) 
		 			- \varphi_m(\omega,x)| \leq 
		 		\beta (\omega,x) e^{\varepsilon m} \textrm{ for all } m \in \mathbb{N} \right\}
			\end{align*}		 		
			is an open neighborhood of $x$ $\mathbb{P}$-almost surely.
	\end{enumerate}
\end{theorem}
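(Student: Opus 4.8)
The plan is to read the two assertions as, respectively, the Multiplicative Ergodic Theorem (Oseledets) and a Pesin--Ruelle type local stable manifold theorem for nonlinear random cocycles; both are available in the form needed in \cite{arnold}, and the two integrability hypotheses are tailored precisely to supply their standing assumptions. First I would form the skew product $\Theta_m : \Omega \times \mathbb{R}^d \to \Omega \times \mathbb{R}^d$, $\Theta_m(\omega,x) = (\theta_m \omega, \varphi_m(\omega,x))$. Since $\rho$ is an ergodic invariant measure for the Markovian semigroup $P_1$ and $\theta$ is $\mathbb{P}$-ergodic, $\Theta_1$ preserves $\mathbb{P}\otimes\rho$, and in the white noise setting one checks ergodicity of the skew product in the standard way. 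The derivative then satisfies the cocycle identity $D\varphi_{m+k}(\omega,x) = D\varphi_k(\theta_m\omega, \varphi_m(\omega,x)) \, D\varphi_m(\omega,x)$ over $\Theta$, so $D\varphi_m$ is a linear cocycle.

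For part (i), the first integrability hypothesis states exactly that $\log^{+}\|D\varphi_1\| \in L^1(\mathbb{P}\otimes\rho)$, which is Oseledets' integrability condition. Applying the Multiplicative Ergodic Theorem to this cocycle yields a finite, deterministic (by ergodicity) list of Lyapunov exponents $\lambda_N < \cdots < \lambda_1$ together with the Oseledets filtration, and hence the asserted convergence of $\frac{1}{m} \log |D\varphi_m(\omega,x)v|$ to one of the $\lambda_i$ for every $v \neq 0$ and $\mathbb{P}\otimes\rho$-almost every $(\omega,x)$.

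For part (ii), the key observation is that $\lambda_{top} = \lambda_1 < 0$ forces every $\lambda_i < 0$, so the Oseledets filtration is trivial and the entire tangent space $\mathbb{R}^d$ is the stable subspace; the local stable manifold should therefore be full-dimensional, i.e. an open neighborhood. To make this rigorous I would write, for $y$ near $x$, $\varphi_m(\omega,y) - \varphi_m(\omega,x) = D\varphi_m(\omega,x)(y-x) + R_m(\omega,x,y)$ and control the nonlinear remainder $R_m$ through the $C^{1,\delta}(\bar{B}(0,1))$-norm of the increment $\varphi_1(\omega, \cdot + x) - \varphi_1(\omega,x)$. Here the second integrability hypothesis enters: via Birkhoff's ergodic theorem it guarantees that the random constant measuring this norm grows subexponentially along $\mathbb{P}\otimes\rho$-almost every orbit, i.e. is tempered, so the nonlinearity does not destroy the exponential contraction coming from the negative exponents. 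A Lyapunov--Perron / graph-transform argument then produces a tempered $\beta(\omega,x) > 0$ and an exponent $\varepsilon \in (\lambda_{top},0)$ for which every $y$ with $|y-x|$ small enough satisfies $|\varphi_m(\omega,y) - \varphi_m(\omega,x)| \le \beta(\omega,x) e^{\varepsilon m}$ for all $m$, so that $\mathcal{S}(\omega,x)$ contains a ball about $x$.

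The main obstacle, and the only place where genuine work beyond invoking \cite{arnold} is needed, is the temperedness of the random constants governing the nonlinear remainder: one must show that the second integrability hypothesis on the $C^{1,\delta}$ norm yields a slowly varying (sub-exponential) bound along almost every orbit, so that the contraction rate $\varepsilon < 0$ survives the passage from the linearized to the full nonlinear dynamics. Once temperedness is in hand, the fact that all exponents are negative makes the remaining stable manifold construction a standard fixed-point argument, and openness of $\mathcal{S}(\omega,x)$ follows because the stable manifold has full dimension $d$.
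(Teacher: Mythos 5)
The paper itself contains no argument here: its proof of Theorem \ref{smt} is the single line ``See Lemma 3.1.\ in \cite{SynNoise}''. Your proposal is therefore a blind reconstruction of the proof of the cited result, and in substance it is the right one: form the skew product $\Theta_m(\omega,x)=(\theta_m\omega,\varphi_m(\omega,x))$, apply the multiplicative ergodic theorem to the derivative cocycle under the first $\log^{+}$-integrability hypothesis to get (i), and for (ii) use that $\lambda_1<0$ makes every exponent negative, so the local stable set is full-dimensional, with the second integrability hypothesis supplying (via Birkhoff) tempered control of the $C^{1,\delta}$ remainder in a Ruelle--Pesin type contraction argument. This is exactly the architecture of the proof of \cite[Lemma 3.1]{SynNoise}, so your route coincides with the one the paper delegates to.

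One step you should not wave through as ``standard'': the claim that $\Theta_1$ preserves $\mathbb{P}\otimes\rho$ is false on the canonical two-sided path space with the full $\sigma$-algebra $\mathcal{F}$. Take $f(\omega,x)=g(\omega)h(x)$ with $g$ measurable with respect to the past, e.g.\ $g$ a function of $W_0-W_{-1}$; then $g(\theta_1\omega)$ is a function of $W_1-W_0$ and hence correlates with $\varphi_1(\omega,x)$, so $\mathbb{E}\left[g(\theta_1\cdot)\,h(\varphi_1(\cdot,x))\right]$ need not factor and invariance of the product measure fails. The correct fix, and the place where the white-noise structure is genuinely used, is to restrict the skew product to the future $\sigma$-algebra $\mathcal{F}_{0,\infty}\otimes\mathcal{B}(\mathbb{R}^d)$: since $\varphi_1(\cdot,x)$ is $\mathcal{F}_{0,1}$-measurable and $\theta_1^{-1}\mathcal{F}_{0,\infty}=\mathcal{F}_{1,\infty}\subset\mathcal{F}_{0,\infty}$, independence of $\mathcal{F}_{0,1}$ and $\mathcal{F}_{1,\infty}$ yields invariance of $\mathbb{P}\otimes\rho$, and ergodicity of $\rho$ for $P_1$ transfers to the restricted skew product. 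All quantities appearing in (i) and (ii) are measurable with respect to this smaller $\sigma$-algebra, so nothing is lost; note also that with one-sided time the MET delivers a filtration rather than an Oseledets splitting, which is precisely all that (i) asserts. With that correction your outline, including the temperedness-plus-contraction argument for (ii), goes through.
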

\begin{proof}
	See Lemma 3.1. in \cite{SynNoise}.
\end{proof}

	\noindent
	From the stable manifold theorem (Theorem \ref{smt}) one obtains a random, non-empty, open set 
	$\mathcal{S} (\omega,x)$. One aims to show asymptotic stability on a deterministic, 
	non-empty, open set. The following lemma clarifies the relation between the random set 
	$\mathcal{S} (\omega,x)$ and the existence of a deterministic set $U$ such that $\varphi$ 
	is asymptotically stable on $U$.

\begin{lemma}
	\label{det_set}
	Let $V$ be a random open neighborhood of $x \in \mathbb{R}^d$ and let $t_n \rightarrow \infty$ 
	be a sequence such that
	\begin{align*}
		\mathbb{P} \left( \lim_{m \rightarrow \infty } \textrm{diam} \left( 
		\varphi_{t_m} (\cdot, V(\cdot)) \right) =0 \right) > 0. 
	\end{align*}
	Then, there exists some deterministic $r>0$ such that
	\begin{align*}
		\mathbb{P} \left( \lim_{m \rightarrow \infty } 
			\textrm{diam} \left( \varphi_{t_m} (\cdot, B(x,r)) \right) =0 \right) > 0. 
	\end{align*}
	In particular, $\varphi$ is asymptotically stable on $B(x,r)$.
\end{lemma}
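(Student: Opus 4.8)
The plan is to extract from the random neighborhood $V$ a \emph{deterministic} ball around $x$ that is contained in $V(\omega)$ on an event of positive probability, and then use that the diameter is monotone under inclusion of sets. To this end I would first introduce the random inner radius of $V$ at $x$,
\[
	\rho(\omega) := \sup \left\{ s > 0 : B(x,s) \subset V(\omega) \right\} .
\]
Since $V(\omega)$ is an open set containing $x$ for almost every $\omega$, we have $\rho(\omega) > 0$ almost surely. I would also abbreviate the event from the hypothesis as
\[
	G := \left\{ \omega : \lim_{m \to \infty} \textrm{diam}\left( \varphi_{t_m}(\omega, V(\omega)) \right) = 0 \right\},
\]
so that $\mathbb{P}(G) > 0$ by assumption.

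Next I would decompose $G$ according to the size of $\rho$. For a sequence $r_k \downarrow 0$ the events $G \cap \{\rho > r_k\}$ increase, and their union is $G \cap \{\rho > 0\} = G$ up to a null set, since $\rho > 0$ almost surely. By continuity of measure from below, $\mathbb{P}(G \cap \{\rho > r_k\}) \to \mathbb{P}(G) > 0$, hence there is some $k$ with $\mathbb{P}(G \cap \{\rho > r_k\}) > 0$; fix $r := r_k$ for this $k$. On the event $G \cap \{\rho > r\}$ the definition of $\rho$ gives some $s$ with $r < s \le \rho(\omega)$ and $B(x,s) \subset V(\omega)$, so that $B(x,r) \subset B(x,s) \subset V(\omega)$. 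Applying $\varphi_{t_m}(\omega, \cdot)$ to this inclusion yields $\varphi_{t_m}(\omega, B(x,r)) \subset \varphi_{t_m}(\omega, V(\omega))$, whence $\textrm{diam}(\varphi_{t_m}(\omega, B(x,r))) \le \textrm{diam}(\varphi_{t_m}(\omega, V(\omega))) \to 0$ on this event. Therefore
\[
	\mathbb{P}\left( \lim_{m \to \infty} \textrm{diam}\left( \varphi_{t_m}(\cdot, B(x,r)) \right) = 0 \right) \ge \mathbb{P}\left( G \cap \{\rho > r\} \right) > 0 ,
\]
which is exactly the claimed statement; asymptotic stability of $\varphi$ on the deterministic open set $B(x,r)$ then follows immediately from the definition, witnessed by the sequence $(t_m)$.

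The one point requiring care—and the step I expect to be the main obstacle—is measurability: the continuity-from-below argument is only legitimate once the events $\{\rho > r\} = \{\omega : B(x,r) \subset V(\omega)\}$ are known to be $\mathcal{F}$-measurable. This relies on $V$ being a genuine random open set rather than an arbitrary $\omega$-dependent family. In the intended application $V(\omega) = \mathcal{S}(\omega,x)$ is the stable neighborhood produced by Theorem \ref{smt}, and its inner radius is controlled by the measurable map $\beta(\omega,x)$; consequently $\rho$ is a measurable function of $\omega$ and all the events above are measurable. Once measurability is secured, the inclusions and the monotonicity of the diameter are routine.
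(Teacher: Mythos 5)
Your argument is correct in substance and rests on the same core idea as the paper's proof: replace the random neighborhood by deterministic balls $B(x,1/k)$ and invoke countable additivity of $\mathbb{P}$ to find one radius that works with positive probability. The difference lies in how the events are organized, and it is exactly what produces the measurability issue you flag at the end. You intersect the good event $G$ with the events $\{\rho > r_k\}$ and apply continuity of measure from below, which requires $\{\omega : B(x,r) \subset V(\omega)\}$ to be $\mathcal{F}$-measurable --- something the lemma as stated does not provide, so your patch via the intended application (where $V(\omega) = \mathcal{S}(\omega,x)$ has inner radius controlled by the measurable map $\beta(\omega,x)$ from Theorem \ref{smt}) is genuinely needed on your route. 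The paper orders the steps so that this issue never arises: it applies monotonicity of the diameter \emph{first}, obtaining the pointwise inclusion of $G$ into $\bigcup_{k \in \mathbb{N}} \left\{ \lim_{m \rightarrow \infty} \textrm{diam}\left( \varphi_{t_m}\left(\cdot, B\left(x,\tfrac{1}{k}\right)\right)\right) = 0 \right\}$, whose constituent events involve only deterministic balls and are measurable with no assumption on $V$ whatsoever; $\sigma$-additivity then yields some $k$ with positive probability. So the paper's version is slightly more general (no measurability hypothesis on $V$, hence no appeal to the structure of $\mathcal{S}(\omega,x)$ inside this lemma), while yours makes the mechanism --- the random inner radius --- explicit. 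If you wish to keep your formulation without extra hypotheses, you could either run the continuity-from-below step with outer measure, or simply absorb $\{\rho > r\}$ into the conclusion event as the paper does.
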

\begin{proof}
	For each $\omega \in \Omega$ there exists $k \in \mathbb{N}$ such that 
	$B \left( x,\frac{1}{k} \right) \subset V(\omega)$. Hence,
	\begin{align*}
	\begin{split}
		&\left\{ \lim_{m \rightarrow \infty} \textrm{diam} 
		\left( \varphi_{t_m} (\cdot,V(\cdot))\right) =0 \right\} \\
		& \qquad \subset \left\{ \lim_{m \rightarrow \infty} \textrm{diam} \left( \varphi_{t_m} 
			\left( \cdot,B \left( x, \frac{1}{k} \right) \right) \right) =0 
			\textrm{ for some } k \in \mathbb{N} \right\}.
	\end{split}
	\end{align*}
	By $\sigma$-additivity of $\mathbb{P}$ there exists some $r>0$ such that
	\begin{align*}
		\mathbb{P} \left( \lim_{m \rightarrow \infty } \textrm{diam} 
		\left( \varphi_{t_m} (\cdot, B(x,r)) \right) =0 \right) > 0. 
	\end{align*}
\end{proof}

\begin{remark}
	\label{invariant_measure}
	To apply the stable manifold theorem (Theorem \ref{smt}), finiteness of the above stated expectations
	and an invariant measure is required. 
	For an invariant measure consider the $n$-dimensional double-well potential 
	with non-degenerate additive noise. That is
	\begin{equation} \label{sdeng}
	\begin{split}
		d X_t &= \left( X_t - \left| X_t	 \right| ^{2} X_t \right) dt 
			+ \sigma \, dW_t  \quad \textrm{on} \; \mathbb{R}^n .
	\end{split}
	\end{equation}
	By \cite[Theorem, p. 243]{InvMea} and since 
	the function $f(x) = e^{\frac{2}{\sigma^2}(\frac{1}{2} |x|^2 - \frac{1}{4} |x|^4) }$ satisfies 
	the equation $0 = \left( \mathcal{L}^\ast f \right) (x) = 
	\frac{1}{2} \sigma^2 \triangle f(x) - \nabla \cdot \left( f(x)( 1 - |x|^2) x \right)$, 
	the Markovian semigroup associated to the RDS of (\ref{sdeng}) has the invariant probability measure 
	\begin{align*}
		d \hat{\rho}(x) = \frac{1}{Z_\sigma}  
		e^{\frac{2}{\sigma^2}(\frac{1}{2} |x|^2 - \frac{1}{4} |x|^4) } \, dx ,
	\end{align*}
	where $Z_\sigma = \int_{\mathbb{R}^n} e^{\frac{2}{\sigma^2}(\frac{1}{2} |x|^2 
	- \frac{1}{4} |x|^4) } \, dx$.
	Considering our original $d$-dimensional SDE with degenerate noise (\ref{sde}), if one starts in 
	$M := \left\{ (x_1, x_2, \dots ,x_d) \in \mathbb{R}^d : x_i = 0 \textrm{ for } i>n \right\}$ 
	one will stay in $M$. Hence, starting in $M$ the problem 
	simplifies to the $n$-dimensional non-degenerate case (\ref{sdeng}). 
	Therefore, the measure $\rho$ on $\mathbb{R}^d$ with
	\begin{align*}
		\rho \left(  A \times \left\{ 0 \right\} \right)
			= \hat{\rho} (A)
			=  \frac{1}{Z_\sigma} \int_A  e^{\frac{2}{\sigma^2}
				(\frac{1}{2} |x|^2 - \frac{1}{4} |x|^4) } \, dx 
	\end{align*}
	for all $A \in \mathcal{B} \left( \mathbb{R}^{n} \right)$ and
	\begin{align*}
		&\rho \left( \mathbb{R}^{n} \times  
		\left( \mathbb{R}^{d-n} \setminus\left\{ 0 \right\} \right) \right) =0 
	\end{align*}
	is an invariant probability measure of the Markovian semigroup associated to the RDS of (\ref{sde}).
\end{remark}

\begin{lemma}
	\label{assumptions}
	The RDS $\varphi$ associated to (\ref{sde}) satisfies $\varphi_t (\omega, \cdot) \in C_{loc}^2$,
	\begin{align*}
		\mathbb{E} \int_{\mathbb{R}^d} \log^{+} \left\| D\varphi_1(\omega,x) \right\| \, d\rho(x) < \infty
	\end{align*}
	and 
	\begin{align*}
		\mathbb{E} \int_{\mathbb{R}^d} \log^{+} \left\| \varphi_1(\omega,\cdot + x)
			- \varphi_1(\omega,x)\right\|_{C^{1,\delta}(\bar{B}(0,1))}
			\, d\rho(x) < \infty.
	\end{align*}
\end{lemma}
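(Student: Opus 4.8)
The plan is to verify the three assertions separately, using crucially that the noise in (\ref{sde}) is additive: the first- and second-order derivative processes of the flow then solve \emph{pathwise} linear equations in which the driving Brownian motion does not appear. Smoothness and the first integral are immediate. Since $b(x)=(1-|x|^2)x$ is a polynomial and the diffusion coefficient is constant, the standard theory of stochastic flows for SDEs with smooth coefficients gives $\varphi_t(\omega,\cdot)\in C^\infty \subset C^2_{loc}$. Differentiating the flow, $\Phi_t:=D\varphi_t(\omega,x)$ solves $\dot\Phi_t=Db(\varphi_t)\Phi_t$, $\Phi_0=I$, where $Db(z)v=(1-|z|^2)v-2\langle z,v\rangle z$ is symmetric with $\langle v,Db(z)v\rangle=(1-|z|^2)|v|^2-2\langle z,v\rangle^2\le|v|^2$. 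Hence $\tfrac{d}{dt}|\Phi_t v|^2\le 2|\Phi_t v|^2$, and Gr\"onwall yields the \emph{deterministic} bound $\|D\varphi_t(\omega,x)\|\le e^{t}$ for all $t,x,\omega$. Thus $\log^{+}\|D\varphi_1\|\le 1$, and the first integral is at most $1$ because $\rho$ is a probability measure.

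For the second integral I would control the $C^{1,\delta}$-norm through the second derivative. The second directional derivative $w_t:=D^2\varphi_t(\omega,x)[v,v]$ solves $\dot w_t=Db(\varphi_t)w_t+D^2b(\varphi_t)[v_t,v_t]$ with $w_0=0$, where $v_t=\Phi_t v$ and, since $D^2b$ is linear in its base point, $\|D^2b(z)\|\le C|z|$. By the cocycle property $\Phi_t\Phi_s^{-1}=D\varphi_{t-s}(\theta_s\omega,\varphi_s)$ also has norm at most $e^{t-s}$, so variation of constants together with $|v_s|\le e^{s}$ (taking $|v|=1$) gives
\begin{align*}
	\|D^2\varphi_1(\omega,z)\|\le C\int_0^1 e^{1-s}e^{2s}\,|\varphi_s(\omega,z)|\,ds\le C'\sup_{s\in[0,1]}|\varphi_s(\omega,z)|.
\end{align*}
It then remains to bound the right-hand side uniformly for $z$ in a unit ball. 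Setting $\xi_t:=(\sigma W_t,0)$ and $u_t:=\varphi_t-\xi_t$, the random ODE $\dot u_t=b(u_t+\xi_t)$ together with the dissipativity identity $\langle u,b(u+\xi)\rangle=|u+\xi|^2-|u+\xi|^4-\langle \xi,b(u+\xi)\rangle$, which is negative once $|u+\xi|$ exceeds a threshold of order $M(\omega):=\sigma\sup_{s\le 1}|W_s|$, forces $\sup_{s\le 1}|\varphi_s(\omega,z)|\le G(|z|,M(\omega))$ for some $G$ bounded by a polynomial in its arguments.

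Finally I would assemble these estimates. For $\psi:=\varphi_1(\omega,x+\cdot)-\varphi_1(\omega,x)$ on $\bar{B}(0,1)$ one has $\psi(0)=0$ and $D\psi(y)=D\varphi_1(\omega,x+y)$, so $\|\psi\|_{C^0},\|D\psi\|_{C^0}\le e$ by the first step, while the H\"older seminorm of $D\psi$ is dominated by $\sup_{|z-x|\le 1}\|D^2\varphi_1(\omega,z)\|$. Using $|z|\le|x|+1$ on this ball this yields $\|\psi\|_{C^{1,\delta}(\bar{B}(0,1))}\le C\bigl(1+G(|x|+1,M(\omega))\bigr)$, whence $\log^{+}\|\psi\|_{C^{1,\delta}}$ is bounded by a constant plus a fixed multiple of $\log^{+}(|x|+1)+\log^{+}M(\omega)$. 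The expectation of $\log^{+}M$ is finite because $\sup_{s\le 1}|W_s|$ has Gaussian tails, and $\int\log^{+}(|x|+1)\,d\rho(x)<\infty$ because the density of $\rho$ from Remark \ref{invariant_measure} decays like $e^{-|x|^4/(2\sigma^2)}$; hence the second integral is finite as well.

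The main obstacle is the uniform-in-$(z,s)$ growth estimate $\sup_{s\le 1}|\varphi_s(\omega,z)|\le G(|z|,M(\omega))$ with at most polynomial dependence on $|z|$ and $M(\omega)$: once this dissipativity bound is established, the variational estimates and the $\log^{+}$-integrability against the super-Gaussian measure $\rho$ are routine.
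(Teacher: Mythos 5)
Your proposal is correct and takes essentially the same route as the paper: the identical estimate $\langle u, Db(z)u\rangle \le |u|^2$ plus Gr\"onwall giving the deterministic bound $\left\| D\varphi_t \right\| \le e^t$ for the first integral, and for the second integral exactly the ingredients the paper prepares (namely $\left\| D^2 b(z) \right\| \le 6|z|$ and $\int \log^+ |x| \, d\rho(x) < \infty$) before delegating the remainder to \cite[Lemma 3.9]{SynNoise}. The only difference is that you unpack that citation explicitly --- second variational equation with variation of constants, a pathwise dissipativity bound on $\sup_{s \le 1} |\varphi_s(\omega,z)|$ in terms of $|z|$ and $\sigma \sup_{s\le 1}|W_s|$, and the resulting $\log^+$-integrability --- all of which is sound and self-contains what the paper handles by reference.
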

\begin{proof}
	Let $t>0$, $\omega \in \Omega$ and $x,u \in \mathbb{R}^d$.
	The derivatives of the drift $b$ satisfy
	\begin{align}
	\label{est_b1}
	\begin{split}
		\left\langle Db(x) u, u \right\rangle 
		&= -2 \left| \left\langle x, u \right\rangle \right|^2 + \left( 1 - |x|^2 \right) | u |^2 \\
		&\leq \left( 1 - |x|^2 \right) |u|^2 
		\leq |u|^2
	\end{split}
	\end{align}
	and
	\begin{align*}
		\left\| D^2 b(x) \right\| \leq 6 |x|.
	\end{align*}
	Moreover, 
	\begin{align*}
		\int_{\mathbb{R}^d} \log^+ (|x|) \, d \rho (x)
		= \int_{\mathbb{R}^d} \log^+ (|x|) 
			e^{\frac{2}{\sigma^2}(\frac{1}{2} |x|^2 - \frac{1}{4} |x|^4) } \, dx < \infty
	\end{align*}
	by rapidly decaying property of $e^{\frac{2}{\sigma^2}(\frac{1}{2} |x|^2 - \frac{1}{4} |x|^4) }$.
	Then, the second estimate and $\varphi_t (\omega, \cdot) \in C_{loc}^2$ follow by the same
	arguments as in \cite[Lemma 3.9]{SynNoise}. 
	In \cite[Lemma 3.9]{SynNoise} SDEs with non-degenerate additive noise were considered. 
	However, the arguments extend to SDEs with degenerate additive noise.
	To get the first estimate observe that
	\begin{align*}
		\frac{d}{dt} D \varphi_t (\omega, x) = Db (\varphi_t (\omega,x)) 
		D\varphi_t (\omega,x), \quad D\varphi_0 (\omega,x) = \textrm{Id}.
	\end{align*}
	Using (\ref{est_b1}) it follows that
	\begin{align*}
		\frac{d}{dt} \left| D \varphi_t (\omega, x) v \right|^2 
		&= 2 \left\langle Db (\varphi_t (\omega,x)) D\varphi_t (\omega,x) v, 
			D \varphi_t (\omega, x) v \right\rangle\\
		&\leq 2 \left| D \varphi_t (\omega, x) v \right|^2.
	\end{align*}
	By Gronwall's inequality
	\begin{align*}
		\left| D \varphi_t (\omega, x) v \right| \leq \left|v \right| e^{t}.
	\end{align*}
	Hence, $\left\| D \varphi_t (\omega, x) \right\| \leq  e^{t}$ and
	\begin{align*}
		\mathbb{E} \int_{\mathbb{R}^d} \log^{+} \left\| D\varphi_1(\omega,x) \right\| \, d\rho(x) 
		\leq \int_{\mathbb{R}^d} \log^{+} \left( e^1 \right)  \, d\rho(x) =1 < \infty.
	\end{align*}
\end{proof}

\begin{lemma}
	\label{le1}
	The top Lyapunov exponent of the RDS associated to (\ref{sde}) corresponding to the 
	invariant measure $\rho$ (see Remark \ref{invariant_measure}) satisfies
	\begin{align*}
		\lambda_{top} \leq \frac{1}{Z_\sigma} \int_{\mathbb{R}^n} (1 - |x|^2) \, 
			e^{\frac{2}{\sigma^2}(\frac{1}{2} |x|^2 - \frac{1}{4} |x|^4) } \, dx ,
	\end{align*}
	where $Z_\sigma = \int_{\mathbb{R}^n} 
	e^{\frac{2}{\sigma^2}(\frac{1}{2} |x|^2 - \frac{1}{4} |x|^4) } \, dx$.
	For $n = 1$ even equality holds.	
\end{lemma}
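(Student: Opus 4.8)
The plan is to study the linearized flow $D\varphi_t(\omega,x)$, whose growth rate defines $\lambda_{top}$, and to control the logarithmic derivative of $|D\varphi_t(\omega,x)v|$ via the Jacobian estimate already recorded in (\ref{est_b1}). Writing $v_t := D\varphi_t(\omega,x)v$ and using $\frac{d}{dt}D\varphi_t = Db(\varphi_t)D\varphi_t$, I would compute
\begin{align*}
	\frac{d}{dt}\log|v_t|
	&= \frac{\left\langle Db(\varphi_t(\omega,x))v_t, v_t\right\rangle}{|v_t|^2} \\
	&= -2\frac{\left|\left\langle \varphi_t(\omega,x), v_t\right\rangle\right|^2}{|v_t|^2} + \bigl(1 - |\varphi_t(\omega,x)|^2\bigr) \\
	&\leq 1 - |\varphi_t(\omega,x)|^2,
\end{align*}
where the inequality simply drops the non-positive first term. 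Integrating from $0$ to $m$ and dividing by $m$ gives, for every $v \neq 0$,
\begin{align*}
	\frac{1}{m}\log|D\varphi_m(\omega,x)v| \leq \frac{1}{m}\int_0^m \bigl(1 - |\varphi_s(\omega,x)|^2\bigr)\,ds.
\end{align*}

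For the upper bound I would let $m\to\infty$. Choosing $v$ with $\lim_m \frac{1}{m}\log|D\varphi_m(\omega,x)v| = \lambda_{top}$, which is possible by Theorem \ref{smt}(i), the left-hand side tends to $\lambda_{top}$. On the right, Birkhoff's ergodic theorem applied to the skew product $(\theta_t\omega,\varphi_t(\omega,x))$, which preserves the ergodic measure $\mathbb{P}\otimes\rho$, yields convergence to the space average $\int_{\mathbb{R}^d}(1-|z|^2)\,d\rho(z)$; integrability of $|z|^2$ against $\rho$ is immediate from the rapidly decaying density. Since $\rho$ is supported on $M$, where $|z|^2=|x|^2$ with $x\in\mathbb{R}^n$, this average equals $\frac{1}{Z_\sigma}\int_{\mathbb{R}^n}(1-|x|^2)e^{\frac{2}{\sigma^2}(\frac12|x|^2-\frac14|x|^4)}\,dx$, which is the asserted inequality.

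For equality when $n=1$ I would construct a matching lower bound by exhibiting a direction whose exponent equals the right-hand side. As $\rho$ is concentrated on $M$ and $M$ is invariant, for $\rho$-almost all $x$ the orbit is $\varphi_t(\omega,x)=(\xi_t,0,\dots,0)$ with $\xi_t$ the one-dimensional diffusion. At such points $Db(\varphi_t(\omega,x))$ is diagonal, so for the second standard basis vector $e_2$ the image $D\varphi_t(\omega,x)e_2$ remains parallel to $e_2$ with $|D\varphi_t(\omega,x)e_2| = \exp\bigl(\int_0^t(1-\xi_s^2)\,ds\bigr)$ (equivalently, the dropped cross term $\langle\varphi_t,v_t\rangle$ vanishes identically). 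Hence
\begin{align*}
	\frac{1}{m}\log|D\varphi_m(\omega,x)e_2| = \frac{1}{m}\int_0^m (1-\xi_s^2)\,ds \longrightarrow \int_{\mathbb{R}}(1-x^2)\,d\hat{\rho}(x).
\end{align*}
Since this limit is one of the Lyapunov exponents of Theorem \ref{smt}(i), it is at most $\lambda_{top}$, so $\lambda_{top}\geq\int_{\mathbb{R}}(1-x^2)\,d\hat{\rho}$, and together with the upper bound equality follows.

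The step I expect to be the main obstacle is the rigorous passage to the limit on the right-hand side, because ergodicity is assumed only for the time-one map $P_1$ rather than for the continuous-time flow. I would handle this by writing $\int_0^m(1-|\varphi_s|^2)\,ds$ as a Birkhoff sum of $g(\omega,x):=\int_0^1(1-|\varphi_s(\omega,x)|^2)\,ds$ over the ergodic transformation $\Theta_1(\omega,x)=(\theta_1\omega,\varphi_1(\omega,x))$, and then using invariance of $\rho$ under $P_s$ together with Fubini to identify $\int g\,d(\mathbb{P}\otimes\rho)$ with $\int_{\mathbb{R}^d}(1-|z|^2)\,d\rho(z)$.
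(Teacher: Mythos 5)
Your proposal is correct and follows essentially the same route as the paper's proof: the same bound $\frac{d}{dt}\log|D\varphi_t(\omega,x)v|\leq 1-|\varphi_t(\omega,x)|^2$ obtained by dropping the nonpositive term $-2\left|\left\langle\varphi_t,v_t\right\rangle\right|^2/|v_t|^2$ from (\ref{est_b1}) (the paper phrases it via Gronwall), the same choice of a point $(\omega,x)$ realizing $\lambda_{top}$ from Theorem \ref{smt}(i) with $x$ automatically in $M$ since $\rho$ is supported there, and for $n=1$ the same diagonal-Jacobian argument on $M$ with a basis vector transverse to the noise direction (your $e_2$ versus the paper's $(0,\dots,0,1)^T$, which is immaterial). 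The only difference is cosmetic: where the paper simply invokes the continuous-time ergodic theorem of \cite{kornfeld1982} to identify the time average with $\frac{1}{Z_\sigma}\int_{\mathbb{R}^n}(1-|x|^2)\,e^{\frac{2}{\sigma^2}(\frac{1}{2}|x|^2-\frac{1}{4}|x|^4)}\,dx$, you discretize via Birkhoff sums of $g(\omega,x)=\int_0^1\left(1-|\varphi_s(\omega,x)|^2\right)ds$ over the time-one skew product and use Fubini with the $P_s$-invariance of $\rho$, which is a standard and equally valid realization of the same step.
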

\begin{proof}
	\textit{Step 1:}
	In the first step, it will be shown that for some $\omega \in \Omega$ and \linebreak
	$x \in M := \left\{ (x_1, x_2, \dots ,x_d) \in \mathbb{R}^d : x_i = 0 \textrm{ for } i>n \right\}$ 
	it holds that
	\begin{align*}
		\lambda_{top} &\leq \liminf_{m \rightarrow \infty} \frac{1}{m} \int_0^m 
		(1 - |\varphi_s(\omega,x)|^2) \, ds.
	\end{align*}
	By Theorem \ref{smt} (i) there exists an $v \in \mathbb{R}^d \setminus \left\{ 0 \right\}$, $x \in M$ 
	and $\omega \in \Omega$ such that
	\begin{align*}
		\lambda_{top} = \lim_{m \rightarrow \infty} \frac{1}{m} \log | D \varphi_m(\omega,x)v|.
	\end{align*}
	$D\varphi_t (\omega,x)$ satisfies the equation
	\begin{align*}
		\frac{d}{dt} D \varphi_t (\omega,x) = 
		Db ( \varphi_t(\omega,x)) D\varphi_t (\omega,x), \quad D\varphi_0 (\omega,x)= Id.
	\end{align*}
	Using the estimation (\ref{est_b1}) it follows that
	\begin{align*}
		\frac{d}{dt} |D \varphi_t (\omega,x)v|^2 
		&= 2 \left\langle Db ( \varphi_t(\omega,x)) D\varphi_t (\omega,x)v, 
			D\varphi_t (\omega,x)v \right\rangle \\
		&\leq 2 \left( 1 - \left| \varphi_t(\omega,x)\right|^2 \right) | D\varphi_t (\omega,x)v|^2.
	\end{align*}
	By Gronwall's inequality,
	\begin{align*}
		| D\varphi_t (\omega,x)v| 
		\leq |v| \, e^{ \int_0^t \left( 1 - \left| \varphi_s(\omega,x)\right|^2 \right) \, ds}.
	\end{align*}
	Hence,
	\begin{align*}
		\lambda_{top} \leq \liminf_{m \rightarrow \infty} \frac{1}{m} 
		\int_0^m \left( 1 - \left| \varphi_s(\omega,x)\right|^2 \right) \, ds.
	\end{align*}
	\textit{Step 2:}
	Let $x \in M$ and $\omega \in \Omega$.
	For $n=1$ it will be shown that
	\begin{align*}
		\lambda_{top} 
		&\geq \lim_{m \rightarrow \infty} \frac{1}{m} \int_0^m (1 - |\varphi_s(\omega,x)|^2) \, ds.
	\end{align*}
	In the case $n=1$, $Db(y)= (1 - |y|^2) Id -2 y \otimes y$ is a diagonal matrix for all $y \in M$.
	Moreover, $ \varphi_t (\omega,x) \in M$ and
	\begin{align*}
		\frac{d}{dt} D \varphi_t (\omega,x)v 
		= Db ( \varphi_t(\omega,x)) D\varphi_t (\omega,x)v, \quad D\varphi_0 (\omega,x)v= v
	\end{align*}
	for any $v \in \mathbb{R}^d \setminus \left\{ 0 \right\}$. Denote by $(\cdot)^{(i)}$ the i-th component
	of a vector. Then, for any $v \in \mathbb{R}^d \setminus \left\{ 0 \right\}$
	\begin{align*}
		(D \varphi_t (\omega,x)v)^{(1)} = v^{(1)} \, e^{ \int_0^t (1 - 3 |\varphi_s(\omega,x)|^2) \, ds}
	\end{align*}
	and
	\begin{align*}
		(D \varphi_t (\omega,x)v)^{(i)} = v^{(i)} \, e^{ \int_0^t (1 - |\varphi_s(\omega,x)|^2) \, ds}
	\end{align*}
	for $i>1$. Choose $v = (0, \ldots, 0,1)^T \in \mathbb{R}^d$. Then, 
	$|D \varphi_t (\omega,x)v|= e^{ \int_0^t (1 - |\varphi_s(\omega,x)|^2) \, ds}$.
	Hence,
	\begin{align*}
		\lambda_{top} &\geq \lim_{m \rightarrow \infty} \frac{1}{m} \log | D \varphi_m(\omega,x)v| 
		= \lim_{m \rightarrow \infty} \frac{1}{m} \int_0^m (1 - |\varphi_s(\omega,x)|^2) \, ds.
	\end{align*}
	\textit{Step 3:}
	Step 1 and 2 imply
	\begin{align*}
		\lambda_{top} 
		&\leq \liminf_{m \rightarrow \infty} \frac{1}{m} \int_0^m (1 - |\varphi_s(\omega,x)|^2) \, ds.
	\end{align*}	
	for some $\omega \in \Omega$ and $x \in M$ and equality in the case $n=1$. 
	Since $x \in M$ it holds that $\varphi_s(\omega,x) \in M$ for all $s \geq 0$.
	By the continuous-time ergodic theorem \cite[Section 2]{kornfeld1982} it follows that
	\begin{align*}
		\lambda_{top} \leq \frac{1}{Z_\sigma} \int_{\mathbb{R}^d} (1 - |x|^2) \, d \rho (x)
	\end{align*}
	and equality in the case $n=1$. 
\end{proof}
\begin{theorem}
	\label{le2}
	Let $\lambda_{top}$ be the top Lyapunov exponent of the RDS associated to (\ref{sde}). 
	\begin{enumerate}
		\item[(i)]
			For $n=1$ and $\sigma \leq \frac{1}{2}$, it holds that $\lambda_{top} > 0$.
		\item[(ii)]
			For $n=1$ and $\sigma \geq 2$, it holds that $\lambda_{top} < 0$.
		\item[(iii)]
		 	For $n \geq 2$, it holds that $\lambda_{top} < 0$.
	\end{enumerate}
\end{theorem}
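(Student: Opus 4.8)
The plan is to reduce all three statements to determining the sign of a single explicit integral, exploiting that the exponent in the invariant density is a perfect square. Writing $\beta := \sigma^{-2}$ and completing the square,
\[
\frac{2}{\sigma^2}\Big(\tfrac12|x|^2 - \tfrac14|x|^4\Big) = -\frac{\beta}{2}\big(|x|^2-1\big)^2 + \frac{\beta}{2},
\]
so by Lemma \ref{le1} the sign of $\lambda_{top}$ (for $n=1$, where equality holds) and of its upper bound (for $n\ge2$) coincides with the sign of
\[
N_n(\beta) := \int_{\mathbb{R}^n}\big(1-|x|^2\big)\,e^{-\frac{\beta}{2}(|x|^2-1)^2}\,dx,
\]
the constant factor $e^{\beta/2}$ cancelling against the partition function. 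The whole problem thus becomes: show $N_n(\beta)<0$ for all $\beta>0$ when $n\ge2$; show $N_1(\beta)>0$ for $\beta\ge4$ (i.e.\ $\sigma\le\frac12$) and $N_1(\beta)<0$ for $\beta\le\frac14$ (i.e.\ $\sigma\ge2$).

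For part (iii) I would pass to polar coordinates and use $\frac{d}{dr}e^{-\frac{\beta}{2}(r^2-1)^2} = 2\beta r(1-r^2)e^{-\frac{\beta}{2}(r^2-1)^2}$, turning the radial integral into $N_n(\beta) = \frac{c_n}{2\beta}\int_0^\infty \frac{d}{dr}\big(e^{-\frac{\beta}{2}(r^2-1)^2}\big)r^{n-2}\,dr$, with $c_n$ the surface measure of the unit sphere. Integrating by parts, the boundary term vanishes for $n\ge3$ and equals a negative constant for $n=2$, while the remaining term is $-(n-2)\int_0^\infty e^{-\frac\beta2(r^2-1)^2}r^{n-3}\,dr\le0$. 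Hence $N_n(\beta)<0$ for every $\beta>0$ and every $n\ge2$, and the upper bound from Lemma \ref{le1} closes the case. This argument is short and uniform in $\sigma$.

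The genuine work is the one-dimensional case, where I would substitute $w=x^2$ and then $s=w-1$ to obtain $N_1(\beta) = -\,G(\beta)$ with $G(\beta):=\int_{-1}^\infty \frac{s}{\sqrt{1+s}}\,e^{-\frac\beta2 s^2}\,ds$. Splitting $G=\int_{-1}^1+\int_1^\infty$ and reflecting the inner part about $0$, one gets $\int_{-1}^1 = -\int_0^1 s\,q(s)\,e^{-\frac\beta2 s^2}\,ds$ with $q(s)=\frac{1}{\sqrt{1-s}}-\frac{1}{\sqrt{1+s}}>0$, so the inner region contributes negatively and the tail $\int_1^\infty$ positively; the sign of $G$ is decided by which wins. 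The key elementary inequality, proved by convexity of $t\mapsto t^{-3/2}$ (so that $\tfrac12 f(1-s)+\tfrac12 f(1+s)\ge f(1)=1$ gives $q'\ge1$), is $q(s)\ge s$ on $[0,1)$. For part (ii), $\beta\le\frac14$, I would bound the inner magnitude uniformly by $\int_0^1 s\,q(s)\,ds=\frac{2\sqrt2}{3}$ (dropping the Gaussian factor) and the tail from below via $\frac{s}{\sqrt{1+s}}\ge\sqrt{s/2}$; since the tail integrand increases as $\beta$ decreases, it suffices to check at $\beta=\frac14$ that $\frac{1}{\sqrt2}\int_1^\infty\sqrt{s}\,e^{-s^2/8}\,ds>\frac{2\sqrt2}{3}$, whence $G>0$ for all $\beta\le\frac14$. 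For part (i), $\beta\ge4$, the roles reverse: I would bound the tail above by $\frac{1}{\sqrt2\,\beta}e^{-\beta/2}$ and the inner magnitude below, using $q(s)\ge s$ and $e^{-\frac\beta2 s^2}\ge e^{-1/2}$ on $[0,\beta^{-1/2}]$, by $\frac{e^{-1/2}}{3}\beta^{-3/2}$; the required inequality then reduces to $\sqrt\beta\,e^{-\beta/2}<\frac{\sqrt2}{3}e^{-1/2}$, which holds at $\beta=4$ and persists for $\beta\ge4$ because $\sqrt\beta\,e^{-\beta/2}$ is decreasing there. This gives $G<0$, hence $\lambda_{top}>0$.

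I expect the main obstacle to be part (i): for small $\sigma$ the measure concentrates near $|x|=1$, exactly where $1-|x|^2$ changes sign, so the leading-order contributions nearly cancel and the positivity of $\lambda_{top}$ is a second-order, anharmonic effect. The substitution $x^2=1+s$ is what renders this cancellation explicit and linear, and the convexity bound $q(s)\ge s$ together with the exponential smallness of the tail is what lets me certify the sign uniformly over the entire range $\beta\ge4$ rather than only asymptotically as $\sigma\to0$.
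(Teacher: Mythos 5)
Your proposal is correct, and it splits cleanly into two comparisons with the paper. For part (iii) your argument coincides with the paper's proof: the reduction via Lemma \ref{le1} and completing the square, polar coordinates, recognizing $(1-r^2)r\,e^{-\frac{\beta}{2}(r^2-1)^2}$ as $\frac{1}{2\beta}\frac{d}{dr}e^{-\frac{\beta}{2}(r^2-1)^2}$, and integration by parts (boundary term $-e^{-\beta/2}$ for $n=2$, bulk term $-(n-2)\int_0^\infty r^{n-3}e^{-\frac{\beta}{2}(r^2-1)^2}\,dr<0$ for $n\ge3$) are exactly the steps there. For $n=1$ you take a genuinely different route. The paper splits at $x=1$ in the original variable: on the tail it integrates by parts via $(1-x^2)e^{-\frac{1}{2\sigma^2}(x^2-1)^2}=\frac{\sigma^2}{2}\frac{1}{x}\frac{d}{dx}e^{-\frac{1}{2\sigma^2}(x^2-1)^2}$ to obtain the exact identity $\int_1^\infty=-\frac{\sigma^2}{2}+\frac{\sigma^2}{2}\int_1^\infty x^{-2}e^{-\frac{1}{2\sigma^2}(x^2-1)^2}\,dx$, substitutes $u=1-x^2$ on $[0,1]$, and then plays the cancelling $\pm\frac{\sigma^2}{2}$ terms against each other with separate regime-specific bounds. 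You instead substitute $s=x^2-1$ globally, symmetrize the inner integral to expose the cancellation through $q(s)=(1-s)^{-1/2}-(1+s)^{-1/2}$, and quantify the surviving defect by the convexity bound $q(s)\ge s$, which is valid: $q'(s)=\frac12\left[(1-s)^{-3/2}+(1+s)^{-3/2}\right]\ge1$ since $t\mapsto t^{-3/2}$ is convex, and $q(0)=0$. I verified your remaining ingredients: $\int_0^1 s\,q(s)\,ds=\frac43-\left(\frac43-\frac{2\sqrt2}{3}\right)=\frac{2\sqrt2}{3}$; on $[1,\infty)$ indeed $\sqrt{s/2}\le\frac{s}{\sqrt{1+s}}\le\frac{s}{\sqrt2}$; $\int_1^\infty s\,e^{-\frac{\beta}{2}s^2}\,ds=\frac1\beta e^{-\beta/2}$; the restriction to $[0,\beta^{-1/2}]\subset[0,1]$ is legitimate since $\beta\ge4$; and both terminal numerical checks are true. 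At $\beta=4$ your inequality $2e^{-2}<\frac{\sqrt2}{3}e^{-1/2}$ is equivalent to $e^{3/2}>3\sqrt2\approx4.243$, true since $e^{3/2}\approx4.482$, and $\sqrt\beta\,e^{-\beta/2}$ is indeed decreasing for $\beta>1$; at $\beta=\frac14$, since $\sqrt s\,e^{-s^2/8}$ is unimodal with maximum at $s=\sqrt2$, its minimum on $[1,2]$ is $\sqrt2e^{-1/2}$ and on $[2,3]$ is $\sqrt3e^{-9/8}$, so $\int_1^\infty\sqrt s\,e^{-s^2/8}\,ds\ge\sqrt2e^{-1/2}+\sqrt3e^{-9/8}\approx1.42>\frac43$, which gives your required $\frac{1}{\sqrt2}\int_1^\infty\sqrt s\,e^{-s^2/8}\,ds>\frac{2\sqrt2}{3}$ with room to spare. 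Comparing the two: your symmetrized form makes the mechanism structurally transparent (the measure concentrates exactly where $1-x^2$ changes sign, and the sign of $\lambda_{top}$ is the second-order effect $q(s)-s\ge0$ against an exponentially small tail), and it yields uniform monotone control in $\beta$ on each side of the threshold; the paper's per-regime estimates are somewhat shorter case by case but more ad hoc. One caution: your $\beta=4$ margin ($0.2707$ versus $0.2859$) is tighter than the paper's corresponding check, so in a final write-up the elementary verification $e^{3/2}>3\sqrt2$ should be made explicit rather than left as ``check at $\beta=4$''.
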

\begin{proof}
	\textit{Case $n \geq 2$:} By Lemma \ref{le1} and changing to polar coordinates,
	\begin{align*}
		\lambda_{top} &\leq \frac{1}{Z_\sigma} \int_{\mathbb{R}^n} (1 - |x|^2) \, 
			e^{\frac{2}{\sigma^2}(\frac{1}{2} |x|^2 - \frac{1}{4} |x|^4) } \, dx \\
		& = \frac{1}{Z_\sigma} \, e^{\frac{1}{2 \sigma^2}} \int_{\mathbb{R}^n} (1 - |x|^2) \, 
			e^{- \frac{1}{2 \sigma^2} \left(|x|^2 -1\right)^2 } \, dx \\
		& = c \int_0^\infty (1 - r^2) r^{n-1} \, e^{- \frac{1}{2 \sigma^2} \left(r^2 -1\right)^2 } \, dr \\
		& = \tilde{c} \int_0^\infty r^{n-2} \left( \frac{d}{dr} \, 
			e^{- \frac{1}{2 \sigma^2} \left(r^2 -1\right)^2 } \right) \, dr
	\end{align*}
	with constants $c, \tilde{c} >0$. For $n=2$, 
	\begin{align*}
		\lambda_{top} \leq \tilde{c} \int_0^\infty \left( \frac{d}{dr} \, 
			e^{- \frac{1}{2 \sigma^2} \left(r^2 -1\right)^2 } \right) \, dr 
		= - \tilde{c} \, e^{-\frac{1}{2 \sigma^2}} <0.
	\end{align*}
	For $n \geq 3$, using integration by parts it follows that
	\begin{align*}
		\lambda_{top} &\leq \tilde{c} \left( \left[ r^{n-2} 
				e^{- \frac{1}{2 \sigma^2} \left(r^2 -1\right)^2 } \right]_0^\infty
			-  \int_0^\infty (n-2) r^{n-3} \, 
				e^{- \frac{1}{2 \sigma^2} \left(r^2 -1\right)^2 } \, dr \right) \\
			&= - \tilde{c} (n-2) \int_0^\infty r^{n-3} \, 
				e^{- \frac{1}{2 \sigma^2} \left(r^2 -1\right)^2 } \, dr <0.
	\end{align*}
	\textit{Case $n=1$:}
	By Lemma \ref{le1},
	\begin{align*}
		\lambda_{top} &= \frac{1}{Z_\sigma} \int_{-\infty}^{\infty} (1 - x^2) \, 
			e^{\frac{2}{\sigma^2}(\frac{1}{2} x^2 - \frac{1}{4} x^4) } \, dx \\
		&= \frac{2}{Z_\sigma} e^{\frac{1}{2 \sigma^2}} \int_0^\infty (1 - x^2) \, 
			e^{-\frac{1}{2\sigma^2}\left(x^2-1\right)^2 } \, dx.
	\end{align*}
	Using integration by parts, it follows that
	\begin{align}
		\int_1^\infty (1 - x^2) \, e^{-\frac{1}{2\sigma^2}\left(x^2-1\right)^2 } \, dx
		&= \frac{\sigma^2}{2} \int_1^\infty \frac{1}{x} \left( \frac{d}{dx} \, 
			e^{-\frac{1}{2\sigma^2}\left(x^2-1\right)^2 } \right) \, \nonumber dx \\ 
		&= \frac{\sigma^2}{2} \left( \left[ \frac{1}{x} 
			e^{-\frac{1}{2\sigma^2}\left(x^2-1\right)^2 } \right]_1^\infty  
			+\int_1^\infty \frac{1}{x^2} \, 
			e^{-\frac{1}{2\sigma^2}\left(x^2-1\right)^2 }  \, dx \right) \nonumber\\
		&= - \frac{\sigma^2}{2} +\frac{\sigma^2}{2} \int_1^\infty \frac{1}{x^2} \, 
			e^{-\frac{1}{2\sigma^2}\left(x^2-1\right)^2 }  \, dx.   \label{eq1}
	\end{align}
	We use integration by substitution to get lower estimates. Hence,
	\begin{align*}
		\int_0^1 (1 - x^2) \, e^{-\frac{1}{2\sigma^2}\left(x^2-1\right)^2 } \, dx
		&= \int_0^1 \frac{1}{2 \sqrt{1 - x}} \, x \, e^{-\frac{1}{2\sigma^2} x^2 } \, dx \\
		&= \int_0^\frac{3}{4} \frac{1}{2 \sqrt{1 - x}} \, x \, e^{-\frac{1}{2\sigma^2} x^2 } \, dx
			+ \int_\frac{3}{4}^1 \frac{1}{2 \sqrt{1 - x}} \, x \, e^{-\frac{1}{2\sigma^2} x^2 } \, dx \\
		&\geq \frac{1}{2} \int_0^\frac{3}{4} x \, e^{-\frac{1}{2\sigma^2} x^2 } \, dx
			+ \int_\frac{3}{4}^1 x \, e^{-\frac{1}{2\sigma^2} x^2 } \, dx \\ 
		&= \frac{\sigma^2}{2} + \frac{\sigma^2}{2} \left( e^{- \frac{1}{2 \sigma^2} \, \frac{9}{16}}
			- 2 \, e^{-\frac{1}{2 \sigma^2}} \right).
	\end{align*}
	Combining this estimate and (\ref{eq1}) yields to
	\begin{align*}
		\int_0^\infty (1 - x^2) \, e^{-\frac{1}{2\sigma^2}\left(x^2-1\right)^2 } \, dx 
		&\geq \frac{\sigma^2}{2} + \frac{\sigma^2}{2} \left( e^{- \frac{1}{2 \sigma^2} \, \frac{9}{16}}
			- 2 e^{-\frac{1}{2 \sigma^2}} \right) - \frac{\sigma^2}{2} \\
		&= \frac{\sigma^2}{2} \,  e^{- \frac{1}{2 \sigma^2}} 
			\left( e^{ \frac{1}{2 \sigma^2} \, \frac{7}{16}} - 2  \right) >0 
	\end{align*}
	for $\sigma \leq \frac{1}{2}$.
	Moreover, the following upper estimates on the integrals hold:
	\begin{align*}
		\int_0^1 (1 - x^2) \, e^{-\frac{1}{2\sigma^2}\left(x^2-1\right)^2 } \, dx 
		\leq \int_0^1 (1 - x^2) \, dx  
		= \left[x - \frac{1}{3} x^3 \right]_0^1 = \frac{2}{3}
	\end{align*}
	and
	\begin{align*}
		\int_1^\infty \frac{1}{x^2} \, e^{-\frac{1}{2\sigma^2}\left(x^2-1\right)^2 }  \, dx
		&= \int_1^{\sqrt{1+2 \sigma}}  \frac{1}{x^2} \, e^{-\frac{1}{2\sigma^2}\left(x^2-1\right)^2 }  \, dx 
			+\int_{\sqrt{1+2 \sigma}}^\infty  
				\frac{1}{x^2} \, e^{-\frac{1}{2\sigma^2}\left(x^2-1\right)^2 }  \, dx \\
		&\leq \int_1^{\sqrt{1+2 \sigma}}  \frac{1}{x^2} \, dx 
			+ e^{-2} \, \int_{\sqrt{1+2 \sigma}}^\infty  \frac{1}{x^2}  \, dx \\
		&= 1- \frac{1}{\sqrt{1 + 2 \sigma}} \left( 1 - e^{-2} \right).
	\end{align*}
	Combining these estimates and (\ref{eq1}), it follows that
	\begin{align*}
		\int_0^\infty (1 - x^2) \, e^{-\frac{1}{2\sigma^2}\left(x^2-1\right)^2 } \, dx 
		&\leq \frac{2}{3} - \frac{\sigma^2}{2} \frac{1}{\sqrt{1 + 2 \sigma}} \left( 1 - e^{-2} \right) \\
		&\leq \frac{2}{3} - \frac{\sqrt{2}}{\sqrt{3}} \left( 1 - e^{-2} \right) <0
	\end{align*}
	for $\sigma \geq 2$.
\end{proof}

\begin{remark}
	In the case $n=1$ there even exists some $\frac{1}{2}< \sigma^\ast <2$ such that $\lambda_{top} >0$ for
	$\sigma < \sigma^\ast$ and $\lambda_{top} <0$ for $\sigma > \sigma^\ast$. This can be seen by observing 
	that 
	\begin{align*}
		 \sigma \mapsto \int_{0}^{\infty} (1 - x^2) \, e^{-\frac{1}{2\sigma^2}(x^4-2x^2) } \, dx
	\end{align*}
	is strictly decreasing. However, this involves some more estimates of integrals.
\end{remark}

\begin{theorem}
	\label{asymp_stab}
	If the top Lyapunov exponent of the RDS $\varphi$ associated to (\ref{sde}) is negative,
	then there exists some $x \in M$  and $r>0$ such that $\varphi$ is asymptotically stable on $B(x,r)$. 
	In particular, this is the case for $n=1$ with $\sigma \geq 2$ and for $n \geq 2$.
\end{theorem}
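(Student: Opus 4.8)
The plan is to combine the stable manifold theorem (Theorem \ref{smt}) with the descent-to-a-deterministic-ball argument of Lemma \ref{det_set}. First I would verify that the hypotheses of Theorem \ref{smt} are in force: the regularity $\varphi_t(\omega,\cdot)\in C^2_{loc}$ and the two logarithmic integrability bounds are supplied by Lemma \ref{assumptions}, while the invariant measure $\rho$ for $P_1$ is provided by Remark \ref{invariant_measure}. Since $\rho$ is the embedding into $M$ of the invariant measure $\hat{\rho}$ of the non-degenerate $n$-dimensional system (\ref{sdeng}), and the dynamics restricted to $M$ coincide with that non-degenerate system, ergodicity of $\rho$ follows from ergodicity of $\hat{\rho}$.

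Next I would fix some $\varepsilon\in(\lambda_{top},0)$, which is possible precisely because $\lambda_{top}<0$ by assumption. Theorem \ref{smt} (ii) then yields a map $\beta$ such that for $\rho$-almost every $x$ the set $\mathcal{S}(\omega,x)$ is $\mathbb{P}$-almost surely an open neighborhood of $x$. As $\rho$ is supported on $M$, I may choose such an $x$ lying in $M$. By the defining inequality of $\mathcal{S}(\omega,x)$, both $|\varphi_m(\omega,y)-\varphi_m(\omega,x)|$ and $|\varphi_m(\omega,x)-\varphi_m(\omega,z)|$ are bounded by $\beta(\omega,x)e^{\varepsilon m}$ for every $y,z\in\mathcal{S}(\omega,x)$, so the triangle inequality gives
\[
	|\varphi_m(\omega,y)-\varphi_m(\omega,z)| \leq 2\beta(\omega,x)\,e^{\varepsilon m},
\]
hence $\textrm{diam}\left(\varphi_m(\omega,\mathcal{S}(\omega,x))\right)\leq 2\beta(\omega,x)e^{\varepsilon m}$. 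Because $\varepsilon<0$, the right-hand side tends to $0$ as $m\to\infty$ on the almost-sure event that $\mathcal{S}(\omega,x)$ is an open neighborhood. Therefore, setting $V(\omega):=\mathcal{S}(\omega,x)$ and $t_m:=m$, one obtains $\mathbb{P}\left(\lim_{m\to\infty}\textrm{diam}(\varphi_m(\cdot,V(\cdot)))=0\right)=1>0$.

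Finally I would invoke Lemma \ref{det_set} with this random open neighborhood $V$ and the sequence $t_m=m$. It produces a deterministic $r>0$ with $\mathbb{P}\left(\lim_{m\to\infty}\textrm{diam}(\varphi_m(\cdot,B(x,r)))=0\right)>0$, which is exactly asymptotic stability of $\varphi$ on $B(x,r)$ with $x\in M$. The final assertion follows from Theorem \ref{le2}, which guarantees $\lambda_{top}<0$ for $n=1$ with $\sigma\geq2$ and for every $n\geq2$.

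The main obstacle is not in this assembly, which is short once Theorem \ref{smt} is available, but in ensuring its hypotheses genuinely hold in the degenerate setting, and in particular in justifying ergodicity of the invariant measure $\rho$ concentrated on the lower-dimensional set $M$. The subtle conceptual point, handled by Lemma \ref{det_set}, is the passage from the random stable neighborhood $\mathcal{S}(\omega,x)$, whose shape depends on $\omega$, to a fixed deterministic ball; this relies only on the fact that every $\mathcal{S}(\omega,x)$ contains some $B(x,\tfrac{1}{k})$ together with $\sigma$-additivity of $\mathbb{P}$.
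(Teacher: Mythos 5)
Your proposal is correct and follows essentially the same route as the paper's proof: verify the hypotheses of Theorem \ref{smt} via Lemma \ref{assumptions} and Remark \ref{invariant_measure}, pick $\varepsilon\in(\lambda_{top},0)$ to obtain the stable neighborhood $\mathcal{S}(\omega,x)$ with $x\in M$ (possible since $\rho$ is supported on $M$), and pass to a deterministic ball via Lemma \ref{det_set}, with Theorem \ref{le2} supplying the sign of $\lambda_{top}$ in the stated cases. Your extra details --- the triangle-inequality bound $\textrm{diam}\left(\varphi_m(\omega,\mathcal{S}(\omega,x))\right)\leq 2\beta(\omega,x)e^{\varepsilon m}$ and the remark that ergodicity of $\rho$ is inherited from $\hat{\rho}$ --- merely make explicit what the paper leaves implicit, and do not constitute a different argument.
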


\begin{proof}
	Remark \ref{invariant_measure} provides an invariant measure and Lemma \ref{assumptions} 
	shows that the assumptions of the stable manifold theorem
	(Theorem \ref{smt}) hold. In the considered cases, Theorem \ref{le2} yields that $\lambda_{top} < 0$.
	By stable manifold theorem (Theorem \ref{smt}), for every $\varepsilon \in (\lambda_{top},0)$
	there is a measurable map 
	$\beta: \Omega \times \mathbb{R}^d \rightarrow \mathbb{R}_+ \setminus \left\{ 0 \right\}$ 
	and $x \in M$ such that 
	\begin{align*}
		 \mathcal{S} (\omega,x) 
		 := \left\{ y \in \mathbb{R}^d : | \varphi_m(\omega,y) - \varphi_m(\omega,x)| \leq 
		 	\beta (\omega,x) e^{\varepsilon m} \textrm{ for all } m \in \mathbb{N} \right\}
	\end{align*}		 		
	is an open neighborhood of $x$ $\mathbb{P}$-a.s. Hence,
	\begin{align*}
		\mathbb{P} \left( \lim_{m \rightarrow \infty} 
		\textrm{diam} \left( \varphi_{t_m} ( \cdot, S (\cdot, x)) \right) =0 \right) >0.
	\end{align*}
	Lemma \ref{det_set} implies the existence of some $r>0$ such that $\varphi$ is 
	asymptotically stable on $B(x,r)$.
\end{proof}

\section{Synchronization}

We prove synchronization for the RDS associated to (\ref{sde}) in case of 
negative top Lyapunov exponent.
First, we show some similar properties to swift transitivity and contraction on large sets focusing 
on the set $M := \left\{ (x_1, x_2, \dots ,x_d) \in \mathbb{R}^d : x_i = 0 \textrm{ for } i>n \right\}$.
These will be used to show that the attractor is in any small ball centered at $M$
with positive probability. For negative top Lyapunov exponent, we use asymptotic stability in such a small
ball and apply \cite[Lemma 2.5]{SynNoise} to conclude synchronization.

\begin{lemma}
	\label{st}
	For all $x, y \in M$ and $r > 0$, there 
	is a time $t_0>0$ such that
	\begin{align*}
		\mathbb{P}(\varphi_{t_0} ( \cdot, B(x,r)) \subset B(y,2r)) > 0.
	\end{align*}
\end{lemma}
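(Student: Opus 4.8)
The plan is to combine the Stroock--Varadhan support theorem with an explicit deterministic controllability argument, crucially exploiting that both $x$ and $y$ lie in $M$, so that only the controlled $X$-directions must actually be steered.

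First I would reduce the probabilistic statement to a control problem. Because the noise in (\ref{sde}) is additive and acts only on the first $n$ coordinates, the substitution $\tilde X_t := X_t - \sigma W_t$ turns (\ref{sde}) into a random ODE whose right-hand side depends on the driving path only through its continuous evaluation. Consequently the flow map $z \mapsto \varphi_{t_0}(\omega, z)$ depends continuously on the driving path in the supremum norm, uniformly for $z$ in the compact set $\overline{B(x,r)}$ (the trajectories stay in a bounded region by the dissipativity used in Theorem \ref{ex_attractor}). It therefore suffices to produce a time $t_0>0$ and a continuous control $h : [0,t_0] \to \mathbb{R}^n$ with $h(0)=0$ whose deterministic controlled flow $\Phi^h$, solving $dz = b(z)\,dt + \sigma\,(dh,0)$, satisfies $\Phi^h_{t_0}(\overline{B(x,r)}) \subset B(y,\rho_0)$ for some $\rho_0 < 2r$. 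The strict inclusion is an open condition on a compact image, so by continuity every path $W$ with $\|W-h\|_{\infty,[0,t_0]}$ small enough yields $\varphi_{t_0}(\omega,B(x,r)) \subset B(y,2r)$, and the support theorem (positivity of Wiener measure on sup-norm tubes around continuous paths, in the canonical setup of Remark \ref{ex_rds}) guarantees that the set of such $W$ has positive $\mathbb{P}$-measure.

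Next I would construct $h$ in two phases, using piecewise-linear paths so each phase is a smooth ODE. In the \emph{contraction phase} I drive the $X$-component radially outward to a large radius $R$ with $R^2 > 4/3$ and hold it there. Since the additive control cancels in the difference of two trajectories, the evolution of $|\Phi^h_t(z_1) - \Phi^h_t(z_2)|^2$ is governed purely by the drift, and while $|\Phi^h_t(z_1)| \ge R$ the one-sided Lipschitz estimate of Lemma \ref{oslc} gives $\frac{d}{dt}|\Phi^h_t(z_1) - \Phi^h_t(z_2)|^2 \le 2\,(1 - \tfrac34 R^2)\,|\Phi^h_t(z_1) - \Phi^h_t(z_2)|^2$, so the diameter of the blob contracts exponentially; simultaneously, since $1 - |z|^2 < 0$ at large radius, the equation $\dot Y = (1-|z|^2)Y$ drives all $Y$-components toward $0$. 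Holding long enough produces a set $\Phi^h_{\tau_1}(\overline{B(x,r)})$ of diameter at most $r/4$ whose $Y$-components are arbitrarily small, i.e.\ a tight blob near $M$. In the \emph{transport phase} I steer the $X$-component from radius $R$ to $y^X$ quickly, using a large control input over a short time, so that the expansion factor $e^{t}$ coming from the same estimate (now with no sign on $1 - \tfrac34|z|^2$) stays below $2$, while the already-tiny $Y$-components grow by at most that bounded factor. A suitable choice of hold time and transport speed then places the final blob inside $B(y,\rho_0)$ with $\rho_0 < 2r$.

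The main obstacle is the bookkeeping of the contraction phase: Lemma \ref{oslc} only contracts a distance while the radius of the relevant trajectory exceeds $\sqrt{4/3}$, so I must verify that the additive radial control keeps \emph{every} trajectory issuing from $B(x,r)$ at large radius simultaneously, not just the centre. This is achieved by pushing the centre's $X$-component out to radius $R+2r$: every point then lies within $2r$ of it (the $Y$-coordinates only help, since $|z|\ge|X|$), and the self-reinforcing contraction of the diameter preserves this margin. The hypothesis $x,y\in M$ is essential and cannot be dropped: the $Y$-directions carry no noise and their dynamics merely rescale $Y$, so a target with nonzero $Y$-component could not be reached --- precisely the failure of swift transitivity noted in the introduction. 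Working on $M$ we only ever realize motion in the controllable $X$-directions and drive $Y$ back to $0$, both of which the construction accomplishes.
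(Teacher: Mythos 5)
Your proposal is correct, and its probabilistic skeleton coincides with the paper's: reduce to a deterministic control problem via continuity of the solution map in the driving path (the additive-noise transformation $\tilde X = X - \sigma W$), exploit positivity of Wiener measure on sup-norm tubes around a continuous control $h$ with $h(0)=0$, keep the control path inside $M$'s first $n$ coordinates so the reference trajectory stays in $M$, and control the spread of the ball by Gronwall through Lemma \ref{oslc}. Where you genuinely diverge is in the control construction, and here the paper is markedly leaner: it uses no contraction phase at all. It simply sets $t_0 = \ln\frac{3}{2}$ and lets the control force the centre along the straight line $\psi(t) = x + \frac{t}{t_0}(y-x)$, so $\varphi_{t_0}(\omega^0,x)=y$ exactly; the crude one-sided Lipschitz bound $\langle x-y, b(x)-b(y)\rangle \le |x-y|^2$ gives expansion factor $e^{t_0}=\frac32$, hence every point of $B(x,r)$ lands within $\frac32 r$ of the centre trajectory, and a tube of tolerance $\frac{r}{2}$ around $\omega^0$ (using continuity of $\omega \mapsto \varphi_{t_0}(\omega,x)$ at the \emph{single} point $x$ only, rather than your uniform-over-compact version) finishes since $\frac32 r + \frac r2 = 2r$. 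The key observation you missed is that the target ball $B(y,2r)$ has twice the radius of the source ball, so a sub-doubling expansion over an arbitrarily short time — achievable because the additive control can traverse any distance arbitrarily fast — already suffices; your second phase alone, applied from time $0$, is essentially the whole proof. Your first (contraction) phase is sound but redundant for this lemma; it is in fact the idea the paper deliberately isolates in Lemma \ref{cols} (holding the flow at the fixed point $(2,0,\ldots,0)^T$ at radius beyond $\sqrt{4/3}$ to contract diameters), keeping transitivity and contraction as two separate ingredients that are only combined later in Proposition \ref{smalldiameter}. What your heavier construction buys is a stronger landing (final blob of diameter $\le r/2$ with small $Y$-components, rather than merely inside $B(y,2r)$), but at the cost of the radius bookkeeping you yourself flag, none of which the paper needs.
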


\begin{proof}
	Set $t_0 = \ln \frac{3}{2}$,
	\begin{align*}
		\psi(t) := x + \frac{t}{t_0} (y-x)
	\end{align*}
	for $t \in [0,t_0]$ and
	\begin{align*}
		\hat{\omega}^0(t) := \frac{1}{\sigma} \left( \psi(t) - x - \int_0^t b(\psi (s)) \, ds \right)
	\end{align*}
	for $t \in [0,t_0]$. Then, $\psi (t) \in M$ and $\hat{\omega}^0(t) \in M$ for all $t \in [0,t_0]$. 
	Set $\omega^0 $ to be the first $n$ components of $\hat{\omega}^0 $. 
	Then, $\varphi_t (\omega^0,x) = \psi (t)$ for all $t \in [0,t_0]$. In particular, 
	$ \varphi_{t_0}( \omega^0,x) = y$. By one-sided Lipschitz condition of $b$ (Lemma \ref{oslc}) 
	we have that
	\begin{align*}
		\frac{d}{dt} | \varphi_t(\omega, x^\prime) - \varphi_t(\omega,x)|^2 
		&= 2 \left\langle b(\varphi_t(\omega, x^\prime)) - b(\varphi_t(\omega, x)) ,
			\varphi_t(\omega, x^\prime) - \varphi_t(\omega, x) \right\rangle \\
		&\leq 2 |\varphi_t(\omega, x^\prime) - \varphi_t(\omega, x)|^2 
	\end{align*}
	for all $x^\prime \in B(x,r)$, $\omega \in \Omega$ and $t\geq 0$.
	By Gronwall's inequality, it follows that
	\begin{align*}
		| \varphi_t(\omega, x^\prime) - \varphi_t(\omega,x)| 
		\leq | x^\prime - x| \; e^t
		\leq r \; e^{t}
	\end{align*}
	for all $x^\prime \in B(x,r)$, $\omega \in \Omega$ and $t\geq 0$. 
	Then, for all $x^\prime \in B(x,r)$ and $\omega \in \Omega$
	\begin{align*}
		| \varphi_{t_0}(\omega, x^\prime) -y | 
		&\leq | \varphi_{t_0}(\omega, x^\prime) - \varphi_{t_0}(\omega, x)| 
			+ | \varphi_{t_0}(\omega, x) - \varphi_{t_0}(\omega^0, x)| \\
		&\leq \frac{3}{2} r + | \varphi_{t_0}(\omega, x) - \varphi_{t_0}(\omega^0, x)|.
	\end{align*}
	The map $\omega \mapsto \varphi_{t_0} (\omega,x)$ is continuous from 
	$C([0,t_0];\mathbb{R}^n) $ to $ \mathbb{R}^d$. 
	Then, there exists an $\delta > 0$ such that 
	\begin{align*}
		\mathbb{P}\left(\varphi_{t_0}(\cdot, B(x,r)) \subset B(y,2r)\right) 
		&\geq \mathbb{P} \left( | \varphi_{t_0}(\cdot, x) - \varphi_{t_0}(\omega^0, x)| 
			\leq \frac{r}{2} \right) \\
		&\geq \mathbb{P} \left( \sup_{s \in [0,t_0]} |\omega(s)-\omega^0(s)| \leq \delta \right) > 0.
	\end{align*}
	
\end{proof}

\begin{lemma}
	\label{cols}
	For every $R>0$ there is a ball $B(x,R)$ with 
	$x \in M$ and a time $t_0 > 0$ such that
	\begin{align*}
		\mathbb{P} \left( \textrm{diam} \left( \varphi_{t_0} \left( \cdot , B(x,R) \right) \right) \leq \frac{R}{4} \right) > 0.
	\end{align*}
	In particular, the RDS $\varphi$ is contracting on large sets.
\end{lemma}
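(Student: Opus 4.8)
The plan is to contract a ball centered \emph{far out}, where the drift is strongly inward, using the sharp one-sided Lipschitz estimate of Lemma \ref{oslc}, and then to transfer the resulting deterministic contraction onto a set of noise paths of positive probability, exactly as in the proof of Lemma \ref{st}. Fix $x=(L,0,\dots,0)\in M$ with $L$ large to be chosen. For any $u,v\in B(x,R)$ and $\omega\in\Omega$, Lemma \ref{oslc} together with Gronwall's inequality gives
\[
|\varphi_t(\omega,u)-\varphi_t(\omega,v)|\leq|u-v|\,\exp\!\left(\int_0^t\Big(1-\tfrac34|\varphi_s(\omega,u)|^2\Big)\,ds\right),
\]
so that $\textrm{diam}(\varphi_t(\omega,B(x,R)))\leq 2R\,\exp\big(\sup_{u\in B(x,R)}\int_0^t(1-\tfrac34|\varphi_s(\omega,u)|^2)\,ds\big)$. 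It therefore suffices to find $t_0>0$ and a set of paths of positive probability on which this exponent is at most $-\ln 8$ uniformly in $u$, since then the diameter is bounded by $2R/8=R/4$.

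The second step is the deterministic analysis of the noise-free flow $\varphi_s(0,\cdot)$, whose radius $\rho(s)=|\varphi_s(0,u)|$ solves $\dot\rho=(1-\rho^2)\rho$. I would choose $t_0$ to be the time the \emph{innermost} trajectory, starting at radius $L-R$, needs to reach radius $2$; by monotone dependence on the initial radius, every trajectory issuing from $B(x,R)$ then stays in the strongly contracting region $\{|\cdot|\geq 2\}$ throughout $[0,t_0]$. Changing variables to $\rho$ yields
\[
\int_0^{t_0}\Big(1-\tfrac34\rho(s)^2\Big)\,ds=\int_{\rho(t_0)}^{\rho(0)}\frac{1-\tfrac34\rho^2}{\rho^3-\rho}\,d\rho=\Big[-\tfrac34\ln\rho+\tfrac18\ln(1-\rho^{-2})\Big]_{\rho(t_0)}^{\rho(0)} .
\]
Since $L\gg R$ forces every trajectory to start near radius $L$ and to terminate near the common radius $2$, the right-hand side is at most $-\tfrac34\ln(L-R)+O(1)$ uniformly in $u\in B(x,R)$, which tends to $-\infty$ as $L\to\infty$. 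Hence for $L$ large enough the deterministic exponent is strictly below $-\ln 16$, leaving room for a perturbation.

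Finally I would transfer this to positive probability. As noted in the proof of Lemma \ref{st}, $\omega\mapsto\varphi_\cdot(\omega,u)$ is continuous from $C([0,t_0];\mathbb{R}^n)$ to $C([0,t_0];\mathbb{R}^d)$, and this holds locally uniformly in the initial datum over the bounded set $B(x,R)$. Consequently, on a sufficiently small tube $\{\sup_{s\in[0,t_0]}|\omega(s)|\leq\delta\}$ the noisy radii $|\varphi_s(\omega,u)|$ stay close to the deterministic ones, keeping the exponent below $-\ln 8$ for all $u\in B(x,R)$ simultaneously. Because the zero path lies in the support of the Wiener measure, $\mathbb{P}(\sup_{s\in[0,t_0]}|\omega(s)|\leq\delta)>0$, and therefore $\mathbb{P}(\textrm{diam}(\varphi_{t_0}(\cdot,B(x,R)))\leq R/4)>0$, which is contraction on large sets.

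I expect the second step to be the main obstacle: one must force the accumulated contraction below $-\ln 8$ within a \emph{fixed, finite} time, even though the one-sided bound becomes expansive ($1-\tfrac34\rho^2>0$) near the unit sphere. This is precisely why $t_0$ is tied to the innermost trajectory reaching radius $2$, so that the whole ball remains in the contracting region, and why taking $|x|\gg R$ is essential: it makes all trajectories end near a common radius and ensures the strong contraction accumulated at large radii, of order $-\tfrac34\ln|x|$, dominates. Controlling the (small) non-uniformity between trajectories of different initial radii is the delicate part of the estimate.
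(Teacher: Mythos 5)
Your proof is correct, but it takes a genuinely different route from the paper's. The paper never sends the ball far out: it fixes the center $x=(2,0,\dots,0)^T\in M$ once and for all (independently of $R$) and constructs the explicit control $\hat{\omega}^0(t)=-t\,b(x)/\sigma$, under which $x$ is a \emph{fixed point} of the flow, $\varphi_t(\omega^0,x)=x$. Since the one-sided estimate of Lemma \ref{oslc} is asymmetric --- it only needs \emph{one} of the two trajectories to have large modulus --- and the pinned trajectory sits at $|x|=2$ for all time, every point of $B(x,R)$ is attracted to the pinned trajectory at the constant rate $e^{-2t}$, even when $R$ is so large that $B(x,R)$ contains the expansive region around the unit ball; choosing $e^{-2t_0}\leq\frac{1}{16}$ and a tube around $\omega^0$ finishes exactly as in your last step (with the tube around $\omega^0$ rather than the zero path). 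You instead take the zero control, place the ball at radius $L\gg R$, and harvest the contraction accumulated during the deterministic collapse from infinity; your change of variables is right (your antiderivative $-\frac{3}{4}\ln\rho+\frac{1}{8}\ln(1-\rho^{-2})$ equals $-\ln\rho+\frac{1}{8}\ln(\rho^2-1)$, the correct partial-fraction primitive of $(1-\frac{3}{4}\rho^2)/(\rho^3-\rho)$), the comparison argument keeping all radii above $2$ on $[0,t_0]$ is sound, and the coalescence of trajectories started in $[L-R,L+R]$ indeed makes the exponent $\leq-\frac{3}{4}\ln(L-R)+O(1)$ uniformly, so the scheme closes. Comparing the two: the paper's pinning trick buys simplicity and uniformity --- a constant contraction rate, no asymptotic analysis, a center independent of $R$ (slightly stronger than the lemma demands), and only terminal-time continuity of $\omega\mapsto\varphi_{t_0}(\omega,\cdot)$ on $C(B(x,R);\mathbb{R}^d)$ --- and it makes the non-uniformity in initial radius, which you rightly flag as the delicate point of your estimate, disappear entirely. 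Your version needs a bit more: an $R$-dependent center (permitted by the statement), and continuity of the \emph{whole path} $(\omega,u)\mapsto\varphi_\cdot(\omega,u)$ uniformly on $[0,t_0]\times\bar{B}(x,R)$ to control the integrated exponent, which is available by the same Gronwall-type arguments but is more than the paper invokes. In exchange, your argument uses only the zero path in the Wiener support and isolates a nice fact the paper's proof does not reveal: the purely deterministic flow already contracts arbitrarily large balls by an arbitrarily large factor, of order $L^{-3/4}$, if they are placed far enough from the origin.
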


\begin{proof}
	Let $R>0$ and $x := (2,0,0, \dots ,0)^T \in \mathbb{R}^d$. 
	Define
	\begin{align*}
		\hat{\omega}^0(t) := - \frac{t \, b(x)}{\sigma} 
	\end{align*}
	for $t \geq 0$. 
	Set $\omega^0 $ to be the first $n$ components of $\hat{\omega}^0 $.
	Then, $\varphi_t (\omega^0,x) = x$ for all $t \geq 0$.
	By Lemma \ref{oslc}, it holds that
	\begin{align*}
		\left\langle b(x) - b(y), x-y \right\rangle \leq -2 |x-y|^2
	\end{align*}
	for all $y \in \mathbb{R}^d$. This inequality and $\varphi_t (\omega^0,x) = x$ imply
	\begin{align*}
		\frac{d}{dt} |\varphi_t (\omega^0,x) - \varphi_t ( \omega^0,y ) |^2
		&= 2 \left\langle b( \varphi_t (\omega^0,x) ) - b(\varphi_t (\omega^0,y)), 
			\varphi_t (\omega^0,x) - \varphi_t (\omega^0,y) \right\rangle \\
		&\leq -4 \left|  \varphi_t (\omega^0,x) - \varphi_t (\omega^0,y) \right|^2.
	\end{align*}
	for $y \in B(x,R)$ and $t \geq 0$.
	Using Gronwall´s inequality it follows that
	\begin{align*}
		|x - \varphi_t ( \omega^0,y ) | 
		\leq |x-y| \, e^{-2t}
		\leq R \, e^{-2t}
	\end{align*}
	for all $ y \in B(x,R)$ and $t \geq 0$. Choose $t_0 \geq 0$ such that $  e^{-2t_0} \leq \frac{1}{16}$.
	Then, for all $y \in B(x,R)$ and $\omega \in \Omega$ 
	\begin{align*}
		\left| x - \varphi_{t_0} ( \omega,y ) \right|
		& \leq \left|x - \varphi_{t_0} ( \omega^0,y ) \right| 
			+ \left|\varphi_{t_0} ( \omega^0,y ) - \varphi_{t_0} ( \omega,y ) \right| \\
		& \leq  \frac{R}{16} + \left|\varphi_{t_0} ( \omega^0,y ) - \varphi_{t_0} ( \omega,y ) \right| .
	\end{align*}
	The map $\omega \mapsto \varphi_{t_0} (\omega,\cdot)$ is continuous 
	from $C([0,t_0];\mathbb{R}^n) $ to $ C(B(x,R); \mathbb{R}^d)$. 
	Then, there exists an $\delta > 0$ such that 
	\begin{align*}
		\mathbb{P}\left(\varphi_{t_0}(\cdot, B(x,R)) \subset B \left( x,\frac{R}{8} \right) \right) 
		&\geq \mathbb{P} \left( \sup_{y \in B(x,R)} 
			| \varphi_{t_0}(\omega^0, y) - \varphi_{t_0}(\omega, y)| \leq \frac{R}{16} \right) \\
		&\geq \mathbb{P} \left( \sup_{s \in [0,t_0]} |\omega(s)-\omega^0(s)| \leq \delta \right) > 0
	\end{align*}
	and thus
	\begin{align*}
		\mathbb{P} \left( \textrm{diam} \left(\varphi_{t_0}( \cdot , B(x,R))  \right) 
			\leq \frac{R}{4} \right) > 0.
	\end{align*}
\end{proof}

\begin{proposition}
	\label{smalldiameter}
	Let $A$ be the attractor of the RDS $\varphi$. Then, for each $\varepsilon > 0$ 
	there is an $x \in M$ such that
	\begin{align*}
		\mathbb{P} \left( A \subset B(x, \varepsilon ) \right) > 0.
	\end{align*}
\end{proposition}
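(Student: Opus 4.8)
The plan is to reduce the statement to a single geometric ``shrinking step'' and then iterate it, using the $\varphi$-invariance of the attractor together with the independence structure of the canonical white noise setup to keep all probabilities positive. Throughout I write $x_0 := (2,0,\dots,0)^T \in M$ for the centre produced by Lemma~\ref{cols}; note that its proof in fact yields the concentric inclusion $\varphi_{t_0}(\cdot, B(x_0,\rho)) \subset B(x_0,\rho/8)$ with positive probability, not merely the diameter bound. The aim is the implication: for every $\rho>0$, if $\mathbb{P}(A \subset B(x_0,\rho))>0$ then $\mathbb{P}(A \subset B(x_0,\rho/8))>0$. Iterating this from a sufficiently large initial radius and stopping once $\rho/8^k < \varepsilon$ yields $\mathbb{P}(A \subset B(x_0,\varepsilon))>0$ with $x = x_0 \in M$, which is the claim.

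First I would produce the initial radius. Since $A$ is a random compact set it is $\mathbb{P}$-almost surely bounded, so $\omega \mapsto \sup_{a \in A(\omega)} |a - x_0|$ is finite almost surely; hence $\mathbb{P}(A \subset B(x_0,R)) \to 1$ as $R \to \infty$, and in particular there is some $R>0$ with $\mathbb{P}(A \subset B(x_0,R))>0$. Here I use the $\mathcal{F}_0$-measurable version of $A$ from Lemma~\ref{F_0}, so that $\{A \subset B(x_0,\rho)\}$ is $\mathcal{F}_0$-measurable for every $\rho$.

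For the shrinking step, fix $\rho$ with $\mathbb{P}(A \subset B(x_0,\rho))>0$ and let $t=t(\rho)$ be the time from Lemma~\ref{cols}. The event $E := \{A \subset B(x_0,\rho)\}$ is $\mathcal{F}_0$-measurable, while the contraction event $C := \{\varphi_t(\cdot, B(x_0,\rho)) \subset B(x_0,\rho/8)\}$ is $\mathcal{F}_{0,t}$-measurable; since $\mathcal{F}_0$ and $\mathcal{F}_{0,t}$ are independent, $\mathbb{P}(E \cap C) = \mathbb{P}(E)\,\mathbb{P}(C) > 0$. On $E \cap C$, the invariance $A(\theta_t \omega) = \varphi_t(\omega, A(\omega))$ gives $A(\theta_t \omega) \subset \varphi_t(\omega, B(x_0,\rho)) \subset B(x_0,\rho/8)$, that is $\theta_t \omega \in \{A \subset B(x_0,\rho/8)\}$. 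Since $\theta_t$ preserves $\mathbb{P}$,
\begin{align*}
	\mathbb{P}(A \subset B(x_0,\rho/8)) = \mathbb{P}\big(\theta_t^{-1}\{A \subset B(x_0,\rho/8)\}\big) \geq \mathbb{P}(E \cap C) > 0 .
\end{align*}
This establishes the shrinking step, and the iteration described above then finishes the proof.

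The step I expect to be most delicate is keeping the attractor inside a ball with a \emph{fixed} centre in $M$ after each contraction, so that Lemma~\ref{cols} can be reapplied. This is why I rely on the concentric inclusion $B(x_0,\rho) \mapsto B(x_0,\rho/8)$ from the proof of Lemma~\ref{cols} rather than on the bare diameter bound: if one uses only the diameter statement, then after one contraction $A$ sits in a small ball with a random centre, and one must first use the swift-transitivity property of Lemma~\ref{st} (whose endpoints lie in $M$) to re-centre at $x_0$ before contracting again. One should also verify the routine measurability points, namely that $\{A \subset B(x_0,\rho)\}$ is genuinely $\mathcal{F}_0$-measurable for the random compact set $A$, and that $C$ is $\mathcal{F}_{0,t}$-measurable because in the canonical setup $\varphi_t(\cdot,x)$ is $\mathcal{F}_{0,t}$-measurable.
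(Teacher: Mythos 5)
Your proof is correct, and it takes a genuinely shorter route than the paper's. The paper works only from the \emph{statements} of its lemmas: since the diameter bound of Lemma~\ref{cols} leaves the attractor in a small ball with the random centre $\varphi_{t_1+t_0}(\cdot,x_0)\in M$, the paper first transports $B(x_0,r_0)$ into the contracting ball $B(y_1,2r_0)$ via Lemma~\ref{st}, glues the transit and contraction events by independence of $\mathcal{F}_{0,t_0}$ and $\mathcal{F}_{t_0,t_0+t_1}$, and then re-centres at a deterministic point of $M$ through a countable-dense-subset/$\sigma$-additivity argument, shrinking the radius by a factor $\frac{2}{3}$ per round before iterating exactly as you do (same initial-radius argument, same use of the $\mathcal{F}_0$-measurable version of $A$ from Lemma~\ref{F_0}, same invariance step). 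You instead exploit the fact that the construction inside the proof of Lemma~\ref{cols} yields the stronger concentric inclusion $\mathbb{P}\left(\varphi_{t_0}(\cdot,B(x_0,\rho))\subset B\left(x_0,\frac{\rho}{8}\right)\right)>0$ with the \emph{fixed} centre $x_0=(2,0,\dots,0)^T\in M$ for every $\rho>0$ — valid because $|x_0|=2$ gives $\left\langle b(x_0)-b(y),x_0-y\right\rangle\leq -2|x_0-y|^2$ by Lemma~\ref{oslc} and the driving path $\omega^0$ freezes $x_0$ — which lets you dispense with Lemma~\ref{st}, with the two-interval independence argument, and with the re-centring step entirely; your single-step inclusion $E\cap C\subset\theta_t^{-1}\left\{A\subset B\left(x_0,\frac{\rho}{8}\right)\right\}$ with $E\in\mathcal{F}_0$ and $C\in\mathcal{F}_{0,t}$ independent is exactly right. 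The trade-off: your argument consumes a strengthening of Lemma~\ref{cols} beyond its stated form, so in a final write-up you should either restate that lemma with the concentric conclusion or inline the short derivation, whereas the paper's longer route is more modular (it follows the pattern of \cite{SynNoise} and would survive even if the contraction were known only around some ball with no control on where the image lands). One routine point, which the paper glosses over in the same way: measurability of $C$ is obtained via a countable dense subset of the ball, which strictly yields containment in the closed ball $\bar{B}\left(x_0,\frac{\rho}{8}\right)$ — harmless, since $\bar{B}\left(x_0,\frac{\rho}{8}\right)\subset B\left(x_0,\frac{\rho}{4}\right)$ and your iteration still terminates.
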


\begin{proof}
	\textsl{Step 1:} In the first step it will be shown that  
	\begin{align*}
		\mathbb{P} \left( A \subset B(x_0, r_0) \right) > 0
	\end{align*}
	for some $r_0 > 0$, $x_0 \in M$ implies
	\begin{align*}
		\mathbb{P} \left( A \subset B\left( x_1, \frac{2}{3} r_0 \right) \right) >0
	\end{align*}
	for some $x_1 \in M$.
	\\
	Applying Lemma \ref{cols} with $R=2r_0$, there is an $y_1 \in M$, $t_1 >0$ such that
	\begin{align*}
		\mathbb{P} \left( \textrm{diam}\left(\varphi_{t_1} ( \cdot, B(y_1, 2r_0)) \right) 
			\leq \frac{r_0}{2} \right) > 0.
	\end{align*}
	Since $ \mathbb{P}$ is invariant under $\theta_{t_0}$ for every $t_0 > 0$, we have
	\begin{align*}
		\mathbb{P} \left( \textrm{diam}\left(\varphi_{t_1} ( \theta_{t_0} \cdot, B(y_1, 2r_0)) \right) 
			\leq \frac{r_0}{2} \right) > 0.
	\end{align*}
	Applying Lemma \ref{st}, there exists an $t_0 > 0$ such that
	\begin{align*}
		\mathbb{P} \left( \varphi_{t_0} ( \cdot, B(x_0,r_0) ) \subset B(y_1,2 r_0) \right) > 0.
	\end{align*}
	Moreover,
	\begin{align*}
		\left\{ \varphi_{t_0} ( \cdot, B(x_0,r_0)) \subset B(y_1,2r_0) \right\} \in \mathcal{F}_{0,t_0}
	\end{align*}
	and
	\begin{align*}
		\left\{ \textrm{diam}\left(\varphi_{t_1} ( \theta_{t_0} \cdot, B(y_1, 2r_0)) \right) 
			\leq \frac{r_0}{2} \right\} \in \mathcal{F}_{t_0,t_0+t_1}
	\end{align*}
	since $\left\{ \textrm{diam}\left(\varphi_{t_1} ( \cdot , B(y_1, 2r_0)) \right) 
	\leq \frac{r_0}{2} \right\} \in \mathcal{F}_{0,t_1}$
	and $\theta_{t_0}^{-1} \mathcal{F}_{0,t_1} = \mathcal{F}_{t_0,t_0+t_1}$. 
	Independence of $\mathcal{F}_{0,t_0}$ and $\mathcal{F}_{t_0,t_0+t_1}$ implies
	\begin{align*}
	\begin{split}
		&\mathbb{P} \left( \textrm{diam} \left( \varphi_{t_1 +t_0} ( \cdot, B(x_0,r_0)) \right) 
			\leq \frac{r_0}{2} \right)\\
		& \qquad =\mathbb{P} \left( \textrm{diam} \left( \varphi_{t_1} 
			( \theta_{t_0} \cdot, \varphi_{t_0}(\cdot, B(x_0,r_0))) \right) 
			\leq \frac{r_0}{2} \right) \\
		& \qquad \geq \mathbb{P} \left( \varphi_{t_0} ( \cdot, B(x_0,r_0) ) \subset B(y_1,2 r_0) \right)
			 \cdot
			\mathbb{P} \left(\textrm{diam}\left(\varphi_{t_1} ( \theta_{t_0} \cdot, B(y_1, 2r_0)) \right) 
			\leq \frac{r_0}{2} \right)
		 > 0.
	\end{split}
	\end{align*}
	Hence,
	\begin{align*}
		\mathbb{P} \left( \varphi_{t_1+t_0} (\cdot,B(x_0,r_0)) \subset 
			\bar{B}\left( \varphi_{t_1+t_0} (\cdot,x_0),\frac{r_0}{2}\right) \right) >0.
	\end{align*}
	By separability of $\mathbb{R}^n$, there exists a dense subset 
	$\left\{ z_m \right\}_{m \in \mathbb{N}} $ of $M$. 
	Since $ \varphi_{t_1+t_0} (\omega,x_0) \in M$ it follows that
	\begin{align*}
	\begin{split}
		&\left\{ \omega \in \Omega : \varphi_{t_1+t_0} (\cdot,B(x_0,r_0)) 
			\subset \bar{B}\left( \varphi_{t_1+t_0} (\cdot,x_0),\frac{r_0}{2}\right) \right\} \\
		& \quad\subset \left\{ \omega \in \Omega : \varphi_{t_1+t_0} (\cdot,B(x_0,r_0)) 
			\subset B \left( z_m, \frac{2}{3}r_0 \right) \textrm{ for some } m \in \mathbb{N} \right\}.	
	\end{split}
	\end{align*}
	By $\sigma$-additivity of $\mathbb{P}$ there exits an $x_1 \in M$ such that
	\begin{align*}
		\mathbb{P} \left( \varphi_{t_1+t_0} (\cdot,B(x_0,r_0)) 
			\subset B \left( x_1, \frac{2}{3}r_0 \right) \right) > 0.
	\end{align*}
	It holds that $\left\{ \varphi_{t_1+t_0} (\cdot,B(x_0,r_0)) \subset 
	B \left( x_1, \frac{2}{3}r_0 \right) \right\} \in \mathcal{F}_{0,t_1+t_0}$
	and $A$ is $\mathcal{F}_0$-measurable. By independence of $\mathcal{F}_0$ and 
	$\mathcal{F}_{0,t_1+t_0}$ and by the assumption of step 1, it follows that
	\begin{align*}
	\begin{split}
		&\mathbb{P} \left( \varphi_{t_1+t_0} (\cdot, A) \subset B\left(x_1,\frac{2}{3}r_0\right) \right)\\
		&\qquad \geq \mathbb{P} \left( A \subset B(x_0,r_0)\right)\cdot 
			\mathbb{P} \left( \varphi_{t_1+t_0} (\cdot, B(x_0,r_0)) 
			\subset B\left(x_1,\frac{2}{3}r_0\right) \right)
		>0.
	\end{split}
	\end{align*}
	$\varphi$-invariance of $A$ and $\theta_{t_1+t_0}$-invariance of $\mathbb{P}$ imply
	\begin{align*}
		\mathbb{P} \left( A \subset B\left(x_1,\frac{2}{3}r_0\right) \right)>0.
	\end{align*}
	\textsl{Step 2:} 
	Since the attractor $A$ is a random compact set, for each $\omega \in \Omega$ 
	the set $A(\omega)$ is bounded.
	Using $\sigma$-additivity of $\mathbb{P}$ it follows that there exists some $r_0 >0$ such that
	\begin{align*}
		\mathbb{P} \left( A \subset B(0, r_0) \right) > 0.
	\end{align*}
	Applying step one iteratively,
	\begin{align*}
		\mathbb{P} \left( A \subset B(x, \varepsilon) \right) > 0
	\end{align*}
	for some $x \in M$.
\end{proof}

\begin{corollary}
	\label{smalldiameter2}
	Let $A$ be the attractor of the RDS $\varphi$. For each 
	$x \in M$ and $\varepsilon > 0$,
	\begin{align*}
		\mathbb{P} \left( A \subset B(x, \varepsilon ) \right) > 0.
	\end{align*}
\end{corollary}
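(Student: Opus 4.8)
The plan is to leverage Proposition \ref{smalldiameter}, which already places the attractor inside a small ball centered at \emph{some} point of $M$ with positive probability, and then to transport that ball so that it is centered at the prescribed point $x$ by means of the swift-transitivity-type property of Lemma \ref{st}. The corollary thus requires no new estimate; it is a transport argument built on top of the proposition and Lemma \ref{st}.

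Concretely, I would fix $x \in M$ and $\varepsilon > 0$ and set $r := \varepsilon/2$. By Proposition \ref{smalldiameter} applied with radius $r$, there is some $y \in M$ with $\mathbb{P}(A \subset B(y,r)) > 0$. Then Lemma \ref{st}, applied with starting point $y$ and target point $x$ and radius $r$, yields a time $t_0 > 0$ such that $\mathbb{P}(\varphi_{t_0}(\cdot, B(y,r)) \subset B(x,2r)) > 0$. These two positive-probability events are combined using the independence structure of the white noise RDS, exactly as in Step 1 of the proof of Proposition \ref{smalldiameter}: the event $\{A \subset B(y,r)\}$ is $\mathcal{F}_0$-measurable because $A$ is the $\mathcal{F}_0$-measurable version, while $\{\varphi_{t_0}(\cdot, B(y,r)) \subset B(x,2r)\}$ is $\mathcal{F}_{0,t_0}$-measurable since $\varphi_{t_0}(\cdot, \cdot)$ is $\mathcal{F}_{0,t_0}$-measurable. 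As $\mathcal{F}_0$ and $\mathcal{F}_{0,t_0} \subset \mathcal{F}_{0,\infty}$ are independent, the intersection of the two events has probability at least the product of the two positive probabilities, hence is positive. On this intersection, $\varphi$-invariance of $A$ gives $A(\theta_{t_0}\omega) = \varphi_{t_0}(\omega, A(\omega)) \subset \varphi_{t_0}(\omega, B(y,r)) \subset B(x,2r) = B(x,\varepsilon)$. Finally, since $\mathbb{P}$ is $\theta_{t_0}$-invariant, $\mathbb{P}(A \subset B(x,\varepsilon)) = \mathbb{P}(\{\omega : A(\theta_{t_0}\omega) \subset B(x,\varepsilon)\}) > 0$, which is the claim.

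I expect the only delicate point to be the filtration bookkeeping: verifying that the two events genuinely lie in independent sub-$\sigma$-algebras, so that the product of their probabilities is a legitimate lower bound for the intersection, and then chaining the $\varphi$-invariance of $A$ with the measure-preserving property of $\theta_{t_0}$ in the correct order. Since every one of these moves already appears verbatim in the proof of Proposition \ref{smalldiameter}, the corollary follows with only the additional observation that Lemma \ref{st} allows the base point $y$ supplied by the proposition to be steered to any prescribed $x \in M$.
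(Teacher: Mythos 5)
Your proposal is correct and follows essentially the same route as the paper's own proof: it applies Proposition \ref{smalldiameter} with radius $\varepsilon/2$ to find a base point in $M$, uses Lemma \ref{st} to steer that ball to the prescribed $x$, combines the two events via independence of $\mathcal{F}_0$ and $\mathcal{F}_{0,t_0}$, and concludes with $\varphi$-invariance of $A$ and $\theta_{t_0}$-invariance of $\mathbb{P}$. The filtration bookkeeping you flag as delicate is handled exactly as you describe, so no gap remains.
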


\begin{proof}
	By Proposition \ref{smalldiameter} there is an $x_0 \in M$
	such that $\mathbb{P} \left( A \subset B \left( x_0, \frac{\varepsilon}{2} \right) \right) > 0$. 
	By Lemma \ref{st} with starting ball 
	$B \left( x_0, \frac{\varepsilon}{2} \right)$ and arrival point $x$, there is a time $t>0$ such that
	\begin{align*}
		\mathbb{P} \left( \varphi_{t} \left( \cdot,B \left( x_0, \frac{\varepsilon}{2} \right) \right)
			\subset B \left( x, \varepsilon \right) \right) > 0.
	\end{align*}
	$\mathcal{F}_0$-measurability of $A$, $\mathcal{F}_{0,t}$-measurability of $\varphi_t$ and
	independence of $\mathcal{F}_0$ and $\mathcal{F}_{0,t}$ imply
	\begin{align*}
		\mathbb{P} \left( \varphi_t (\cdot,A) \subset B \left( x, \varepsilon \right) \right)
		\geq \mathbb{P} \left( A \subset B \left( x_0, \frac{\varepsilon}{2} \right) \right) \cdot 
			\mathbb{P} \left( \varphi_{t} \left( \cdot,B \left( x_0, \frac{\varepsilon}{2} \right)\right)
			\subset B \left( x, \varepsilon \right)\right) 
		>0.
	\end{align*}
	By $\varphi$-invariance of $A$ and $\theta_t$-invariance of $\mathbb{P}$ it follows that
	\begin{align*}
		\mathbb{P} \left( A \subset B \left( x, \varepsilon \right) \right)>0.
	\end{align*}
\end{proof}

\begin{lemma}
	\label{lemma_synchro}
	Let $A$ be the attractor of the RDS $\varphi$ and let $\varphi$ be asymptotically stable on $U$ with $\mathbb{P}(A \subset U) >0$.
	Then, $\varphi$ synchronizes.
\end{lemma}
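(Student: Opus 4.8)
The plan is to show that the $\mathcal{F}_0$-measurable random variable $d(\omega) := \textrm{diam}(A(\omega))$, which is almost surely finite since $A$ is a random compact set, vanishes $\mathbb{P}$-almost surely; since $A$ is a pullback attractor and hence a weak attractor, this makes $A(\omega)$ a singleton a.s.\ and yields synchronization. Throughout, $A$ denotes the $\mathcal{F}_0$-measurable version of the attractor.

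First I would combine the two hypotheses through the independence of past and future. Asymptotic stability on $U$ provides a deterministic sequence $t_n \to \infty$ with $\mathbb{P}(E) > 0$, where $E := \{\lim_{n} \textrm{diam}(\varphi_{t_n}(\cdot, U)) = 0\}$. Since $\varphi_{t_n}(\cdot, U)$ is $\mathcal{F}_{0,t_n}$-measurable we have $E \in \mathcal{F}_{0,\infty}$, whereas $\{A \subset U\} \in \mathcal{F}_0$ because $A$ is $\mathcal{F}_0$-measurable and $U$ is deterministic and open. As $\mathcal{F}_0$ and $\mathcal{F}_{0,\infty}$ are independent, the event $G := E \cap \{A \subset U\}$ satisfies $\mathbb{P}(G) = \mathbb{P}(E)\,\mathbb{P}(A\subset U) > 0$. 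On $G$ the $\varphi$-invariance of $A$ gives $\varphi_{t_n}(\omega, A(\omega)) = A(\theta_{t_n}\omega)$ together with $A(\omega) \subset U$, so
\[
 d(\theta_{t_n}\omega) = \textrm{diam}(\varphi_{t_n}(\omega, A(\omega))) \le \textrm{diam}(\varphi_{t_n}(\omega, U)) \longrightarrow 0 .
\]

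Second, I would upgrade this to $\mathbb{P}(d = 0) > 0$ using that $\theta_{t_n}$ preserves $\mathbb{P}$. Fix $\varepsilon > 0$. On $G$ the indicator $\mathbbm{1}_{\{d > \varepsilon\}}\circ\theta_{t_n}$ tends to $0$, so dominated convergence gives $\mathbb{E}[\mathbbm{1}_G\,(\mathbbm{1}_{\{d>\varepsilon\}}\circ\theta_{t_n})] \to 0$. Since $\mathbb{P}(d > \varepsilon) = \mathbb{P}(d\circ\theta_{t_n} > \varepsilon)$ by $\theta_{t_n}$-invariance of $\mathbb{P}$, splitting this probability over $G$ and $G^c$ yields $\mathbb{P}(d > \varepsilon) \le \mathbb{P}(G^c) + o(1) = 1 - \mathbb{P}(G) + o(1)$ as $n \to \infty$; as the left side is independent of $n$, we obtain $\mathbb{P}(d > \varepsilon) \le 1 - \mathbb{P}(G)$ for every $\varepsilon > 0$. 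Letting $\varepsilon \downarrow 0$ gives $\mathbb{P}(d = 0) \ge \mathbb{P}(G) > 0$.

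Finally, a zero-one law promotes this to $\mathbb{P}(d=0) = 1$. The stochastic flow generated by (\ref{sde}) consists of diffeomorphisms, so $\varphi_t(\omega,\cdot)$ is injective (which also follows from a reverse Gronwall estimate using the local Lipschitz property of $b$). Hence $\textrm{diam}(A(\theta_t\omega)) = \textrm{diam}(\varphi_t(\omega, A(\omega))) = 0$ if and only if $\textrm{diam}(A(\omega)) = 0$, so the event $\{d = 0\}$ is $\theta_t$-invariant for all $t \ge 0$ up to null sets. By ergodicity of $(\Omega,\mathcal{F},\mathbb{P},\theta)$ its probability lies in $\{0,1\}$, and with the previous step it equals $1$. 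Thus $A(\omega)$ is a singleton for $\mathbb{P}$-almost every $\omega$, i.e.\ $\varphi$ synchronizes. The main obstacle is precisely this last upgrade: asymptotic stability only delivers collapse of the diameter along the shifts $\theta_{t_n}$ with positive probability, and converting that into an almost sure statement hinges on the stationarity of $\theta$ (constancy of the law of $d$) combined with injectivity of the flow to make $\{d=0\}$ genuinely $\theta$-invariant; the independence of $\mathcal{F}_0$ and $\mathcal{F}_{0,\infty}$ in the first step is what allows the two a priori unrelated positive-probability events to be realized simultaneously. This argument is the content of \cite[Lemma 2.5]{SynNoise}.
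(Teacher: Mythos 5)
Your proof is correct, and its first two steps (independence of $\mathcal{F}_0$ and $\mathcal{F}_{0,\infty}$ to realize $\{A \subset U\}$ and the asymptotic-stability event simultaneously, then stationarity of $\theta_{t_n}$ to convert collapse along shifts into $\mathbb{P}(\mathrm{diam}(A)=0)>0$) are precisely the standard argument; the paper itself disposes of the lemma in two lines by noting that $A$ is an $\mathcal{F}_0$-measurable, $\varphi$-invariant random closed set and citing \cite[Lemma 2.5]{SynNoise}, which packages exactly this chain. Where you genuinely diverge is the final zero-one upgrade. You make $\{\mathrm{diam}(A)=0\}$ an a.s.\ $\theta_t$-invariant event, using injectivity of $x \mapsto \varphi_t(\omega,x)$ for the non-trivial implication $\mathrm{diam}(A(\theta_t\omega))=0 \Rightarrow \mathrm{diam}(A(\omega))=0$, and then appeal to ergodicity of the base (which is indeed part of the paper's definition of a metric dynamical system and holds for the Wiener shift). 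The argument behind \cite[Lemma 2.5]{SynNoise} --- also visible in a commented-out proof in this paper's source --- runs differently: it needs only the trivial implication $\mathrm{diam}(A(\theta_{-t}\omega))=0 \Rightarrow \mathrm{diam}(A(\omega))=0$ (the image of a singleton under $\varphi_t(\theta_{-t}\omega,\cdot)$ is a singleton, no injectivity required); since both events carry the same $\mathbb{P}$-mass by stationarity, they coincide a.s., so $\{\mathrm{diam}(A)=0\}$ lies in $\bar{\mathcal{F}}_{-t}$ for every $t>0$ and hence in the remote-past $\sigma$-algebra $\bigcap_{t>0} \bar{\mathcal{F}}_{-t}$, which is $\mathbb{P}$-trivial by Kolmogorov's 0-1 law for the independent increments. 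The trade-off: your route avoids the remote-past measurability bookkeeping but imports two extra hypotheses --- ergodicity of $\theta$ and injectivity of the flow. Injectivity does hold here, since the additive noise reduces (\ref{sde}) pathwise to a random ODE with locally Lipschitz right-hand side, giving backward uniqueness exactly as in your reverse-Gronwall remark; but it would fail for a general white-noise RDS, so the Kolmogorov route is the more robust one, which is why \cite{SynNoise} proves the lemma at that level of generality. One small wording correction: ``consists of diffeomorphisms'' should be weakened to ``is injective,'' since without global backward solvability $\varphi_t(\omega,\cdot)$ need not be surjective --- your parenthetical Gronwall argument is the right justification and is all that your step uses.
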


\begin{proof}
	The attractor $A$ is an $\mathcal{F}_0$-measurable, $\varphi$-invariant, random closed set.
	By \cite[Lemma 2.5]{SynNoise} $A$ is a singleton.
\end{proof}

\begin{theorem}
	\label{synchronization}
	If the top Lyapunov exponent of the RDS $\varphi$ associated to (\ref{sde}) is negative, then
	$\varphi$ synchronizes.
	In particular, this is the case for $n=1$ with $\sigma \geq 2$ and for $n \geq 2$.
\end{theorem}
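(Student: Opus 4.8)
The plan is to assemble the three ingredients already established: the asymptotic stability produced by negativity of the top Lyapunov exponent (Theorem~\ref{asymp_stab}), the positive probability that the attractor is trapped in a small ball centered at a prescribed point of $M$ (Corollary~\ref{smalldiameter2}), and the abstract synchronization criterion (Lemma~\ref{lemma_synchro}). The whole argument is a matching of these pieces, and no new estimate is required.

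First I would invoke Theorem~\ref{asymp_stab}: since $\lambda_{top}<0$ by hypothesis, there exist a point $x\in M$ and a radius $r>0$ such that $\varphi$ is asymptotically stable on the deterministic open set $U:=B(x,r)$. Next I would apply Corollary~\ref{smalldiameter2} with this \emph{same} center $x\in M$ and with $\varepsilon:=r$, which yields $\mathbb{P}(A\subset B(x,r))=\mathbb{P}(A\subset U)>0$. Thus the hypotheses of Lemma~\ref{lemma_synchro} are satisfied, and that lemma gives that the attractor $A$ is $\mathbb{P}$-almost surely a singleton, i.e.\ $\varphi$ synchronizes. For the stated special cases I would then quote Theorem~\ref{le2}, which ensures $\lambda_{top}<0$ both for $n=1$ with $\sigma\geq 2$ and for $n\geq 2$, so synchronization holds there in particular.

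I do not expect a genuine obstacle here; the difficulty has been discharged upstream. The one point demanding care is the \emph{alignment of the two balls}: asymptotic stability is only guaranteed on a ball whose center lies in $M$, while Corollary~\ref{smalldiameter2} only controls the attractor near points of $M$. It is precisely because the trapping statement was strengthened (relative to Proposition~\ref{smalldiameter}, where the center is merely \emph{some} point of $M$) to allow an \emph{arbitrary} center $x\in M$ that one may take the center of the stability ball furnished by Theorem~\ref{asymp_stab} and feed it into the corollary with $\varepsilon=r$, so that the identical set $U$ appears in both the stability statement and the trapping statement. Keeping both statements anchored in $M$ is exactly what makes the concluding application of Lemma~\ref{lemma_synchro} legitimate.
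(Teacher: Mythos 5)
Your proposal is correct and matches the paper's own proof essentially verbatim: Theorem~\ref{asymp_stab} supplies the stability ball $B(x,r)$ with $x\in M$, Corollary~\ref{smalldiameter2} (applied with the same center and $\varepsilon=r$) gives $\mathbb{P}(A\subset B(x,r))>0$, and Lemma~\ref{lemma_synchro} concludes, with Theorem~\ref{le2} covering the particular cases. Your remark on why the corollary's arbitrary center in $M$ (as opposed to Proposition~\ref{smalldiameter}'s unspecified one) is needed to align the two balls is exactly the point of that strengthening.
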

\begin{proof}
	By Theorem \ref{ex_attractor} there exists a pullback attractor $A$ of $\varphi$.
	In case of negative top Lyapunov exponent, Theorem \ref{asymp_stab} implies the existence of 
	some $x \in M$ and $r>0$ such that
	$\varphi$ is asymptotically stable on $B(x,r)$. By Corollary \ref{smalldiameter2} 
	\begin{align*}
		\mathbb{P} \left( A \subset B(x,r) \right) > 0.
	\end{align*}
	Applying Lemma \ref{lemma_synchro} it follows that synchronization occurs.
\end{proof}

\section{No synchronization}

We show that a positive top Lyapunov exponent implies lack of (weak) synchronization for
the RDS associated to (\ref{sde}). In order to prove this, we first need bounds
on the distance of two trajectories.

\begin{lemma}
	\label{less_than4}
	For $x,y \in \mathbb{R}^d$, $\omega \in \Omega$ and $t \geq 0.5$ it holds that
	\begin{align*}
		\left| \varphi_t ( \omega, x ) - \varphi_t ( \omega, y ) \right| \leq 4.
	\end{align*}
\end{lemma}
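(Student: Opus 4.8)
The plan is to exploit the crucial feature of \emph{additive} noise: since both trajectories $\varphi_t(\omega,x)$ and $\varphi_t(\omega,y)$ are driven by the \emph{same} realization $\omega$, and the noise enters only additively (and only in the first $n$ coordinates), the stochastic terms cancel in the difference. Concretely, I would set $u_t := \varphi_t(\omega,x) - \varphi_t(\omega,y)$ and observe that $u_t$ solves the pathwise ordinary differential equation $\frac{d}{dt}u_t = b(\varphi_t(\omega,x)) - b(\varphi_t(\omega,y))$, with $b$ the drift from Section 3. Since the right-hand side is continuous in $t$, the map $t \mapsto |u_t|^2$ is continuously differentiable and
\begin{align*}
\frac{d}{dt}|u_t|^2 = 2\langle b(\varphi_t(\omega,x)) - b(\varphi_t(\omega,y)),\, u_t\rangle .
\end{align*}

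Next I would turn this into a closed scalar differential inequality for $w_t := |u_t|^2$. Lemma \ref{oslc} gives $\langle p - q, b(p) - b(q)\rangle \le |p-q|^2(1 - \tfrac34|p|^2)$, and by symmetry the same bound holds with $|q|^2$ in place of $|p|^2$; averaging the two and using $|p|^2 + |q|^2 \ge \tfrac12|p-q|^2$ yields $\langle p-q, b(p)-b(q)\rangle \le |p-q|^2(1 - \tfrac{3}{16}|p-q|^2)$. Applying this with $p = \varphi_t(\omega,x)$ and $q = \varphi_t(\omega,y)$ gives the logistic inequality
\begin{align*}
\frac{d}{dt} w_t \le 2w_t - \tfrac{3}{8}w_t^2 .
\end{align*}
The point is that the quadratic dissipation dominates for large $w_t$, so one expects a bound on $w_t$ that is \emph{independent} of the (possibly huge) initial separation $w_0 = |x-y|^2$.

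To extract such a bound I would compare $w_t$ with the solution $v_t$ of the logistic ODE $\dot v = 2v - \tfrac38 v^2$, $v_0 = w_0$; by the standard comparison principle $w_t \le v_t$. Solving this equation via the substitution $\tilde u = 1/v$, which linearizes it to $\dot{\tilde u} = \tfrac38 - 2\tilde u$, gives $\tilde u_t = \tfrac{3}{16}(1 - e^{-2t}) + \tilde u_0 e^{-2t} \ge \tfrac{3}{16}(1 - e^{-2t})$, hence for every $v_0 > 0$ the bound $v_t \le \frac{16}{3\,(1 - e^{-2t})}$, uniform in $v_0$ and decreasing in $t$. Evaluating at $t = \tfrac12$ and using monotonicity for $t \ge \tfrac12$ gives $w_t \le \frac{16}{3(1-e^{-1})} < 16$, whence $|u_t| = \sqrt{w_t} \le 4$ for all $t \ge \tfrac12$. (The case $w_0 = 0$ is trivial by uniqueness.)

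The main obstacle is precisely obtaining a separation bound that does not deteriorate as $|x-y| \to \infty$: the one-sided Lipschitz estimate alone only controls growth, so the essential step is recognizing the logistic structure and using the Bernoulli substitution to see that the entire family of logistic solutions is uniformly bounded above at any fixed positive time. The cancellation of the additive noise is what renders the whole argument pathwise and reduces it to this elementary ODE comparison.
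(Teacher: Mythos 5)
Your proof is correct, and it takes a genuinely different route from the paper's. The paper keeps the one-sided estimate of Lemma \ref{oslc} in the non-symmetrized form $\langle x-y,\,b(x)-b(y)\rangle \le |x-y|^2\bigl(1-\tfrac34\max\{|x|^2,|y|^2\}\bigr)$ and combines it with $\max\{|x|,|y|\}\ge\tfrac12|x-y|$ to run a three-step dyadic argument: if $2^{k+2}\le|x-y|\le 2^{k+3}$, Gronwall shows the separation drops to $2^{k+2}$ within time $\ln 2/(3\cdot 4^k-1)\le \ln 2/(2\cdot 4^k)$; summing these times over $k$ as a geometric series gives total time $\tfrac23\ln 2<\tfrac12$ to reach separation $4$; and a separate absorption step shows that once the separation is $\le 4$ it stays $\le 4$. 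You instead symmetrize and average Lemma \ref{oslc} and use $|p|^2+|q|^2\ge\tfrac12|p-q|^2$ to package the same dissipation into one autonomous logistic inequality $\dot w_t\le 2w_t-\tfrac38 w_t^2$ for $w_t=|u_t|^2$ (legitimate, since the additive noise cancels in the difference, making $u_t$ a pathwise $C^1$ solution of an ODE, and the logistic field is locally Lipschitz so the comparison principle applies). The Bernoulli substitution then yields the initial-data-independent bound $w_t\le \frac{16}{3(1-e^{-2t})}$, which at $t=\tfrac12$ is about $8.44<16$, i.e.\ separation below $3\le 4$, and its monotonicity in $t$ automatically covers all $t\ge\tfrac12$. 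Your one-shot comparison thus subsumes all three of the paper's steps: the uniform logistic bound replaces the layer-by-layer contraction through dyadic scales, and its decrease in $t$ replaces the absorption argument. What the paper's version buys is explicit scale-by-scale decay times, which track how fast a large initial separation collapses; what yours buys is brevity, a closed-form bound uniform over all initial conditions, and in fact a slightly sharper constant than the stated $4$.
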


\begin{proof}
	\textit{Step 1:}
	Assume that $2^{k+2} \leq |x-y| \leq 2^{k+3}$ for some $k \geq 0$. 
	Define 
	\begin{align*}
		\tau_k (\omega) := \inf \left\{ t \geq 0 : 
		\left| \varphi_t ( \omega, x ) - \varphi_t ( \omega, y ) \right| \leq 2^{k+2} \right\}.
	\end{align*}
	Let $t \leq \tau_k (\omega)$. Then, $ \left| \varphi_t ( \omega, x ) \right| \geq 2^{k+1}$ or 
	$ \left| \varphi_t ( \omega, y ) \right| \geq 2^{k+1}$.
	Using Lemma \ref{oslc} it follows that
	\begin{align*}
	\begin{split}
		&\frac{d}{dt} \left| \varphi_t (\omega, x) - \varphi_t (\omega,y) \right|^2 \\
		& \qquad= 2 \left\langle b(\varphi_t (\omega, x)) - b(\varphi_t(\omega,y)),  
			\varphi_t (\omega, x) - \varphi_t (\omega,y) \right\rangle\\
		& \qquad \leq 2 \left( 1 - \frac{3}{4} \max \left\{  \left|\varphi_t (\omega, x)\right|^2, 
			 \left|\varphi_t (\omega, y) \right|^2 \right\} \right)
			\left| \varphi_t (\omega, x) - \varphi_t (\omega,y) \right|^2 \\
		& \qquad \leq 2 \left( 1 - \frac{3}{4} \left( 2^{k+1} \right)^2 \right)
			\left| \varphi_t (\omega, x) - \varphi_t (\omega,y) \right|^2.
	\end{split}
	\end{align*}
	By Gronwall's inequality
	\begin{align*}
		\left| \varphi_t (\omega, x) - \varphi_t (\omega,y) \right|
		\leq 2^{k+3} \; e^{\left( 1- 3 \cdot 4^k \right) t}.
	\end{align*}
	Then, for $t = \frac{\textrm{ln} \, 2}{ 3 \cdot 4^k -1 }$ it follows that 
	$\left| \varphi_t (\omega, x) - \varphi_t (\omega,y) \right| \leq 2^{k+2}$.
	Hence,
	\begin{align*}
		\tau_k (\omega) \leq \frac{\textrm{ln} \, 2}{\left( 3 \cdot 4^k -1 \right)}
			\leq \frac{\textrm{ln} \, 2}{2 \cdot 4^k}.
	\end{align*}
	\textit{Step 2:}
	Define
	\begin{align*}
		\tau (\omega) := \inf \left\{ t \geq 0 : 
		\left| \varphi_t ( \omega, x ) - \varphi_t ( \omega, y ) \right| \leq 4 \right\}.
	\end{align*}
	Using Step 1 iteratively it follows that
	\begin{align*}
		\tau (\omega) 
		\leq \sum_{k=0}^\infty  \frac{\textrm{ln} \, 2}{2 \cdot 4^k}
		= \frac{2}{3} \textrm{ln} \, 2 < \frac{1}{2}
	\end{align*}
	\textit{Step 3:}
	It remains to show that if 
	\begin{align*}
		\left| \varphi_r ( \omega, x ) - \varphi_r ( \omega, y ) \right| \leq 4
	\end{align*}
	for some $r \geq 0$ then
	\begin{align*}
		\left| \varphi_s ( \omega, x ) - \varphi_s ( \omega, y ) \right| \leq 4
	\end{align*}
	for all $s \geq r$.
	Assume there is a time $s > r$ such that 
	\begin{align*}
		\left| \varphi_s ( \omega, x ) - \varphi_s ( \omega, y ) \right| > 4.
	\end{align*}
	Define
	\begin{align*}
		\hat{\tau} (\omega) = \sup \left\{ t < s : 
			\left| \varphi_t ( \omega, x ) - \varphi_t ( \omega, y ) \right| \leq 4 \right\}.
	\end{align*}
	Then, $\left| \varphi_t ( \omega, x ) - \varphi_t ( \omega, y ) \right| \geq 4$ 
	for all $ t \in [\hat{\tau}(\omega), s]$. Hence, $\left| \varphi_t ( \omega, x ) \right| \geq 2$ 
	or $\left| \varphi_t ( \omega, x ) \right| \geq 2$ for any $ t \in [\hat{\tau}(\omega), s]$.
	Using Lemma \ref{oslc} it follows that
	\begin{align*}
	\begin{split}
		\frac{d}{dt} \left| \varphi_t (\omega, x) - \varphi_t (\omega,y) \right|^2 
		& = 2 \left\langle b(\varphi_t (\omega, x)) - b(\varphi_t(\omega,y)),  
			\varphi_t (\omega, x) - \varphi_t (\omega,y) \right\rangle\\
		&  \leq 2 \left( 1 - \frac{3}{4} \max \left\{  \left|\varphi_t (\omega, x)\right|^2, 
			 \left|\varphi_t (\omega, y) \right|^2 \right\} \right)
			\left| \varphi_t (\omega, x) - \varphi_t (\omega,y) \right|^2 \\
		&  \leq -4
			\left| \varphi_t (\omega, x) - \varphi_t (\omega,y) \right|^2.
	\end{split}
	\end{align*}
	for all $ t \in [\hat{\tau}(\omega), s]$. By Gronwall's inequality
	\begin{align*}
		\left| \varphi_s (\omega, x) - \varphi_s (\omega,y) \right|
		&\leq \left| \varphi_{\hat{\tau} (\omega)} (\omega, x) 
			- \varphi_{\hat{\tau} (\omega)} (\omega,y) \right| 				
			\; e^{-2 \left( s - \hat{\tau} (\omega) \right)} \\
		&= 4 \; e^{-2 \left( s - \hat{\tau} (\omega) \right)} \leq 4
	\end{align*}
	which is a contradiction to the definition of $s$.
\end{proof}

\begin{theorem}
	If the top Lyapunov exponent of the RDS $\varphi$ associated to (\ref{sde}) is positive, 
	then there is no synchronization of $\varphi$, not even weak synchronization.
	In particular, this is the case for $n=1$ with $\sigma \leq \frac{1}{2}$.
\end{theorem}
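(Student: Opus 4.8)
The plan is to rule out weak synchronization; since a singleton weak attractor is in particular a singleton weak point attractor, this automatically rules out synchronization as well. So I would assume, for contradiction, that $\varphi$ weakly synchronizes, with singleton weak point attractor $\{a(\omega)\}$. The first move is to record the \emph{exact} evolution of the degenerate block. Writing $\varphi_t(\omega,z)=(X_t,Y_t)$ with $Y_t\in\mathbb{R}^{d-n}$, the second line of (\ref{sde}) gives $\dot Y_t=(1-|\varphi_t(\omega,z)|^2)Y_t$, so that $|Y_t|=|Y_0|\exp\!\big(\int_0^t(1-|\varphi_s(\omega,z)|^2)\,ds\big)$, with no noise entering this identity. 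In particular $M$ is forward invariant, and pulling this back shows $a(\omega)\in M$ almost surely. I then fix $x_0\in\mathbb{R}^n$, a unit vector $e\in\mathbb{R}^{d-n}$ and $\epsilon\in(0,1)$, and compare the trajectory from $x':=(x_0,\epsilon e)\notin M$ with the one from $x:=(x_0,0)\in M$, whose $Y$-block vanishes identically.

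The engine is a pathwise upper bound on the time-integral appearing in the exponent. By Lemma \ref{less_than4}, $|\varphi_t(\omega,x')-\varphi_t(\omega,x)|\le 4$ for $t\ge\tfrac12$; since the $Y$-block of $\varphi_t(\omega,x)$ is zero, the $Y$-block $Y_t$ of $\varphi_t(\omega,x')$ satisfies $|Y_t|\le 4$, whence $\int_0^t(1-|\varphi_s(\omega,x')|^2)\,ds\le\log(4/\epsilon)$ for all $t\ge\tfrac12$ and all $\omega$. Taking expectations and using Fubini (legitimate via a moment bound, see below) this yields $\tfrac1t\int_0^t\mathbb{E}\,|\varphi_s(\omega,x')|^2\,ds\ge 1-\tfrac{\log(4/\epsilon)}{t}$, i.e. the time-averaged second moment is at least $1-o(1)$.

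On the other hand, weak synchronization forces the law of $\varphi_t(\cdot,x')$ towards the invariant measure sitting on $M$. Indeed, the singleton weak point attractor gives $\varphi_t(\theta_{-t}\omega,x')-a(\omega)\to 0$ in probability, and applying the measure-preserving shift $\theta_t$ gives $\varphi_t(\omega,x')-a(\theta_t\omega)\to 0$ in probability, where $a(\theta_t\omega)$ has the (shift-invariant) law of $a$. As $a(\omega)\in M$ and $\mathrm{Law}(a)$ is a $P_t$-invariant probability measure supported on $M$, it must coincide with $\rho$ of Remark \ref{invariant_measure}, by unique ergodicity of the nondegenerate $n$-dimensional diffusion (\ref{sdeng}) on $M$. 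Hence $\mathrm{Law}(\varphi_t(\cdot,x'))\to\rho$ weakly. A uniform-in-time moment bound coming from the strongly dissipative drift $-|z|^2z$ (e.g. $\sup_t\mathbb{E}\,|\varphi_t(\omega,x')|^{4}<\infty$) upgrades this to $\mathbb{E}\,|\varphi_t(\omega,x')|^2\to\int|z|^2\,d\rho(z)$, so the time-averaged second moment converges to $\int|z|^2\,d\rho(z)=1-\int(1-|z|^2)\,d\rho(z)=1-\lambda^\ast$, where $\lambda^\ast=\int(1-|z|^2)\,d\rho(z)\ge\lambda_{top}>0$ by Lemma \ref{le1} and the hypothesis $\lambda_{top}>0$. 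Thus the time-averaged second moment is simultaneously $\ge 1-o(1)$ and tends to $1-\lambda^\ast<1$, a contradiction.

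The main obstacle is precisely this last, analytic, step: converting the soft ``convergence in probability'' delivered by weak synchronization into genuine convergence of second moments. This requires two ingredients beyond the pathwise algebra — the identification of $\mathrm{Law}(a)$ with $\rho$ through unique ergodicity on $M$, and a uniform-in-time fourth-moment estimate securing the uniform integrability that justifies passing from weak convergence to convergence of $\mathbb{E}\,|\varphi_t|^2$. Both are routine given the explicit Gibbs form of $\rho$ and the dissipativity of $b$, but they are where the real content lies; the exact exponential growth of the degenerate block together with Lemma \ref{less_than4} supplies the opposing, pathwise, estimate essentially for free.
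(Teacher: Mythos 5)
Your proposal is correct and takes essentially the same route as the paper's proof: the exact exponential formula for the noiseless $Y$-block, the pathwise bound via Lemma \ref{less_than4}, the identification of the law of the singleton attractor (which lies in $M$) with the invariant measure $\rho$, and a contradiction between $\int (1-|z|^2)\,d\rho \geq \lambda_{top} > 0$ (Lemma \ref{le1}) and the second-moment convergence forced by weak synchronization after upgrading convergence in probability by uniform integrability. The only minor repackagings — starting from $(x_0,\epsilon e)$ instead of $(0,\dots,0,1)^T$, phrasing the contradiction through $\mathbb{E}\,|\varphi_t(\cdot,x')|^2 \to \int |z|^2\,d\rho$ rather than $L^1$-convergence of $|\varphi_s(\cdot,x)|^2 - |a(\theta_s\cdot)|^2$, and securing uniform integrability via a standard (asserted but routine) uniform fourth-moment bound in place of the paper's domination $16+8\,|a(\theta_s\cdot)|$ — do not change the argument.
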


\begin{proof}
	Let $x := (0, 0 , \dots , 0,1)^T \in \mathbb{R}^d$ and denote by $(\cdot)^{(d)}$ the $d$-th component
	of a vector. Looking at the dynamics of $\varphi_t(\omega,x)$ one can observe that 
	$(\varphi_t(\omega,x))^{(d)} \leq 1$ for all $t \geq 0$ and $\omega \in \Omega$.
	By It\^{o}'s formula
	\begin{align*}
		\textrm{ln} \, (\varphi_t (\omega,x))^{(d)} 
		= \textrm{ln} \, (\varphi_0 (\omega,x))^{(d)} 
			+ \int_0^t \left( 1 - | \varphi_s(\omega,x) |^2 \right) \, ds
	\end{align*}
	for all $t \geq 0$ and almost all $\omega \in \Omega$. Hence,
	\begin{align*}
		\int_0^t \left( 1 - | \varphi_s(\omega,x) |^2 \right) \, ds \leq 0
	\end{align*}
	for all $t \geq 0$ and almost all $\omega \in \Omega$.
	Assume there is weak synchronization and denote by $a(\cdot)$ the weak point attractor 
	which is a singleton $\mathbb{P}$-almost surely.
	Since the RDS associated to the non-degenerate SDE does synchronize $a(\cdot)$
	is a single random point in $M$.
	Then,
	\begin{align*}
		\int_0^t \left( 1 - | a(\theta_s \omega) |^2 \right) \, ds 
			- \int_0^t \left( | \varphi_s(\omega,x) |^2 - | a(\theta_s \omega) |^2  \right) \, ds 
			\leq 0
	\end{align*}
	for all $t \geq 0$ and almost all $\omega \in \Omega$.
	By $\varphi$ invariance of $a(\cdot)$ and $a(\omega) \in M$, the distribution of $a(\cdot)$ 
	can be described by the invariant measure $\rho$ (see Remark \ref{invariant_measure}). 
	Using Fubini and the distribution of $a(\cdot)$ it follows that
	\begin{align*}
		\mathbb{E} \left[ \frac{1}{t} \int_0^t \left( 1 - | a(\theta_s \omega) |^2 \right) \, ds \right]
		&= \frac{1}{t} \int_0^t \mathbb{E} \left[ 1 - |a(\theta_s \omega) |^2  \right] \,ds \\
		&= \frac{1}{Z_\sigma} \int_{\mathbb{R}} (1 - y^2) \, 
			e^{\frac{2}{\sigma^2}(\frac{1}{2} y^2 - \frac{1}{4} y^4) } \, dy
	\end{align*}
	for all $t \geq 0$.
	By Lemma \ref{le1} and Theorem \ref{le2} this integral is equal to $\lambda_{top}$.
	Therefore,
	\begin{align}
		\label{greater_lambdatop}
		\mathbb{E} \left[ \frac{1}{t} 
			\int_0^t \left( | \varphi_s(\cdot,x) |^2 - | a(\theta_s \cdot) |^2  \right) \, ds \right] 
		\geq \lambda_{top} >0
	\end{align}
	for all $t \geq 0$.
	By weak synchronization $\varphi_s (\theta_{-s} \cdot, x)$ has to converge to $a(\cdot)$ as 
	$s \rightarrow \infty$ in probability. Using the continuous mapping theorem it follows that 
	$\left|\varphi_s (\theta_{-s} \cdot, x)\right|^2$ converges to $\left| a(\cdot) \right|^2$ 
	as $s \rightarrow \infty$ in probability. $\theta_s$ invariance of $\mathbb{P}$ implies that
	\begin{align*}
		\left|\varphi_s (\cdot, x)\right|^2 - \left| a(\theta_s \cdot) \right|^2
		\rightarrow 0 \quad \textrm{as } s \rightarrow \infty
	\end{align*}
	in probability. \\
	By Lemma \ref{less_than4} it follows that
	\begin{align*}
		\big| \left|\varphi_s (\cdot, x)\right|^2 - \left| a(\theta_s \cdot) \right|^2 \big|
		&= \big| \left|\varphi_s (\cdot, x)\right|- \left| a(\theta_s \cdot) \right| \big|
			\cdot \big| \left|\varphi_s (\cdot, x)\right|+ \left| a(\theta_s \cdot) \right| \big| \\
		&\leq \left| \varphi_s (\cdot, x)- a(\theta_s \cdot)  \right|
			\cdot \left( \left|\varphi_s (\cdot, x) - a(\theta_s \cdot) \right| 
			+ 2 \left| a(\theta_s \cdot) \right| \right) \\
		&\leq 16 + 8 \left|a (\theta_s \cdot )\right| 
	\end{align*}
	for $s \geq \textrm{ln} \, 0.5$. Then,
	\begin{align*}
	\begin{split}
		&\mathbb{E} \left[ \big| \left|\varphi_s (\cdot, x)\right|^2 
			- \left| a(\theta_s \cdot) \right|^2 \big| \, 
			\mathbbm{1}_{\big| \left|\varphi_s (\cdot, x)\right|^2 
			- \left| a(\theta_s \cdot) \right|^2 \big| \geq K} \right] \\
		&\qquad \leq \mathbb{E} \left[ \left( 16 + 8 \left| a(\theta_s \cdot) \right| \right) \,
			\mathbbm{1}_{\left| a(\theta_s \cdot) \right| \geq \frac{K -16}{8}} \right] \\
		&\qquad = \frac{1}{Z_\sigma} \int_{\mathbb{R}} (16 + 8 |y|) \, 
			\mathbbm{1}_{|y| \geq \frac{K -16}{8}} \;
			e^{\frac{2}{\sigma^2}(\frac{1}{2} y^2 - \frac{1}{4} y^4) } \, dy
	\end{split}
	\end{align*}
	for $s \geq \textrm{ln} \, 0.5$.
	By rapidly decaying property of $e^{\frac{2}{\sigma^2}(\frac{1}{2} y^2 - \frac{1}{4} y^4) }$
	this integral converges to $0$ as $K \rightarrow \infty$. Hence, 
	$\left( \left|\varphi_s (\cdot, x)\right|^2 - \left| a(\theta_s \cdot) \right|^2 \right)
	_{s\geq \textrm{ln} \, 0.5}$ is uniformly integrable. Therefore,
	$\left|\varphi_s (\cdot, x)\right|^2 - \left| a(\theta_s \cdot) \right|^2$ converges to $0$
	as $s \rightarrow \infty$ in $L^1$. By $L^1$ convergence there exists some $t_0 \geq 0$ such that
	\begin{align*}
		\mathbb{E} \left[ | \varphi_s(\cdot,x) |^2 - | a(\theta_s \cdot) |^2 \right]
		\leq \frac{\lambda_{top}}{2}
	\end{align*}
	for all $s \geq t_0$. Using Fubini it follows that
	\begin{align*}
	\begin{split}
		&\mathbb{E} \left[ \frac{1}{t} 
			\int_0^t \left( | \varphi_s(\cdot,x) |^2 - | a(\theta_s \cdot) |^2  \right) \, ds \right] \\
		& \qquad \leq \frac{1}{t} \, \mathbb{E} \left[ 
			\int_0^{t_0} \left( | \varphi_s(\cdot,x) |^2 - | a(\theta_s \cdot) |^2  \right) \, ds \right]
			+ \frac{t-t_0}{t} \, \frac{\lambda_{top}}{2}
	\end{split}
	\end{align*}
	for $t > t_0$. For large $t$ this term will get smaller than $\lambda_{top}$ 
	which is a contradiction to (\ref{greater_lambdatop}).
\end{proof}

\section{Summary and open problems}

We considered the stochastic differential equation (\ref{sde}) with drift given 
by the multidimensional double-well potential with degenerate additive noise.
Similar to the case with non-degenerate noise, 
if the noise is acting in more than one direction, $n \geq 2$, then synchronization occurs. A more interesting
phenomenon appears in the case where noise affects the SDE in one direction, $n =1$. 
In this case, we proved synchronization for $\sigma \geq 2$ and showed that there will be
no synchronization, not even weak synchronization, for $\sigma \leq \frac{1}{2}$.
\\
In the case $n=1$, there actually exists a critical value $\frac{1}{2} < \sigma^\ast < 2$ where the
behavior changes. That means that there will be synchronization for $\sigma > \sigma^\ast$ 
and there will be no synchronization, not even weak synchronization, for $\sigma < \sigma^\ast$.
It is not clear what happens for $\sigma = \sigma^\ast$ where the top Lyapunov exponent is zero.
\\
Numerical simulations of the $2$-dimensional case suggest that there is
weak synchronization for small noise on $\mathbb{R} \times \mathbb{R}_+$.
It remains an open problem to describe the attractor in this case.

\nocite{*}
\bibliographystyle{plain}
\bibliography{mybib}

\end{document}